\newtheorem{thm}{Theorem}[section]
\newtheorem{cor}[thm]{Corollary}
\newtheorem{prop}[thm]{Proposition}
\newtheorem{lem}[thm]{Lemma}
\newtheorem{conj}[thm]{Conjecture}
\newtheorem{quest}[thm]{Question}
\theoremstyle{definition}
\newtheorem{defn}[thm]{Definition}
\theoremstyle{remark}
\newtheorem{rem}[thm]{Remark}
\let\c@equation\c@thm
\numberwithin{equation}{section}
\title{Stanley-Wilf Limits for Patterns in Rooted Labeled Forests}
\author{Michael Ren}
\begin{document}

\begin{abstract}

Building off recent work of Garg and Peng, we continue the investigation into classical and consecutive pattern avoidance in rooted forests. We prove a forest analogue of the Stanley-Wilf conjecture for avoiding a single pattern as well as certain other sets of patterns. Our techniques are analytic, easily generalizing to different types of pattern avoidance and allowing for computations of convergent lower bounds of the forest Stanley-Wilf limit in the cases covered by our result. We end with several open questions and directions for future research, including some on the limit distributions of certain statistics of pattern-avoiding forests.

\end{abstract}

\maketitle

\section{Introduction}
\label{intro}

A sequence of distinct integers is said to \emph{avoid} a permutation, or pattern, $\pi=\pi(1)\cdots\pi(k)$ of $[k]=\{1,\ldots,k\}$ if it contains no subsequence that is in the same relative order as $\pi$. The study of pattern avoidance in permutations of $[n]$ was started in 1968 by Knuth in \cite{K}, where stack sorting was linked to permutations avoiding the pattern $231$. Since then, pattern avoidance has blossomed into a very active area of research, with many connections made to classical and contemporary results in enumerative and algebraic combinatorics \cite{Ki}. Different variants of permutation pattern avoidance, for example avoidance of consecutive patterns \cite{E}, (bi)-vincular patterns \cite{S, BCDK}, and mesh patterns \cite{BC}, have also been extensively studied, along with notions of pattern avoidance in other combinatorial structures such as binary trees \cite{R} and posets \cite{HW}.

The variant of pattern avoidance that we investigate is in rooted labeled forests, a notion introduced in 2018 by Anders and Archer in \cite{AA}. Here, we consider unordered (non-planar) rooted forests on $n$ vertices such that each vertex has a different label in $[n]$, which we call \emph{rooted forests} on $[n]$. Here, the term unordered and non-planar refers to the fact that the children of the vertices are not endowed with a linear ordering, so when a tree is drawn, its specific embedding into the plane is not relevant. Such a forest is then said to \emph{avoid} a pattern $\pi$ if the sequence of labels from the root to any leaf avoids $\pi$ in the sense described in the previous paragraph. As a special case, this includes the aforementioned case of permutation pattern avoidance when the forest is taken to be a path. Anders and Archer find the number $f_n(S)$ of forests avoiding a set $S$ of patterns in \cite{AA} for certain sets $S$. They also study \emph{forest-Wilf equivalence}, the phenomenon when $f_n(S)=f_n(S')$ for different sets $S$ and $S'$ of patterns and all $n\in\mathbb N$. Their work was continued by Garg and Peng in \cite{GP} where the authors posed several open questions. In a companion paper \cite{Re}, we resolve some of these open questions that pertain to forest-Wilf equivalences.

Our focus in this paper is on the asymptotics of the number of forests on $[n]$ avoiding certain sets $S$ of patterns. The asymptotics of permutations that classically avoid a pattern $\pi$ is governed by the \emph{Stanley-Wilf conjecture}, which states that $\lim_{n\rightarrow\infty}|\text{Av}_n(\pi)|^{1/n}$, the \emph{Stanley-Wilf limit}, exists and is finite for all patterns $\pi$, where $\text{Av}_n(\pi)$ is the number of permutations of $[n]$ that avoid $\pi$. In 1999, Arratia proved in \cite{A} that the limit exists through a supermultiplicativity argument and in 2000 Klazar proved in \cite{Kl} that the finiteness of the limit follows from the F\"uredi-Hajnal conjecture, which was proven in 2004 by Marcus and Tardos in \cite{MT}. Since then, much work has been done to study the value of various Stanley-Wilf limits. In 2013, Fox disproved in \cite{F} the widely believed conjecture that the limits are always quadratic in the pattern length, instead showing that they are generally exponential. For permutations that consecutively avoid a pattern $\pi$, a 2011 result of Ehrenborg, Kitaev, and Perry from \cite{EKP} shows using spectral theoretic methods that the proportion of permutations of $[n]$ avoiding a pattern $\pi$ is $c\lambda^n+O(r^n)$ for positive constants $c,\lambda>r$ only depending on $\pi$. It is then natural to ask how the asymptotics behave for pattern avoidance in forests, and Garg and Peng made the following forest analogue of the Stanley-Wilf conjecture in \cite{GP} with respect to classical avoidance.

\begin{conj}[{\cite[Conjecture 7.2]{GP}}]
\label{gp72}
For any set $S$ of patterns, let $f_n(S)$ and $t_n(S)$ denote the number of rooted forests and trees on $[n]$ avoiding $S$, respectively. Then \[\lim_{n\rightarrow\infty}\frac{f_n(S)^{1/n}}n\text{ and }\lim_{n\rightarrow\infty}\frac{t_n(S)^{1/n}}n\] exist and are equal.
\end{conj}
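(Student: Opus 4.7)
The plan is to work with the exponential generating functions
\[T(x) = \sum_{n \geq 1} t_n(S) \frac{x^n}{n!}, \qquad F(x) = \sum_{n \geq 0} f_n(S) \frac{x^n}{n!},\]
which are linked by $F(x) = \exp(T(x))$ since a rooted labeled forest is a set of rooted labeled trees. Consequently $T$ and $F$ have a common radius of convergence $\rho$, and a Stirling estimate gives
\[\limsup_n \frac{t_n(S)^{1/n}}{n} = \limsup_n \frac{f_n(S)^{1/n}}{n} = \frac{1}{e\rho}.\]
Since the trivial bound $f_n(S) \leq (n+1)^{n-1}$ forces $\rho \geq 1/e$, these $\limsup$'s lie in $[0,1]$, and the equality of the two limits in the conjecture is automatic once they are known to exist.

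My first approach to existence would be supermultiplicativity. Given $S$-avoiding forests $F_1$ on $[m]$ and $F_2$ on $\{m+1,\ldots,m+n\}$, the disjoint union $F_1 \sqcup F_2$ is an $S$-avoiding forest on $[m+n]$, because every root-to-leaf path of $F_1 \sqcup F_2$ lies wholly in one of the two factors. This yields $f_{m+n}(S) \geq f_m(S) f_n(S)$, or equivalently, after dividing by factorials, $g_{m+n} \geq g_m g_n / \binom{m+n}{m}$ with $g_n = f_n(S)/n!$, so $\log g_n$ is super-additive up to an error of $\log \binom{m+n}{m}$.

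The main obstacle is that this error is linear in $m+n$, so a naive Fekete argument does not close the gap. To strengthen the construction I would sum over all $\binom{m+n}{m}$ order-preserving relabelings of $(F_1, F_2)$ onto $[m+n]$ and carefully control the overcounting: a given forest $F$ on $[m+n]$ is produced with multiplicity equal to the number of subsets of its trees of total size $m$. Bounding this multiplicity in aggregate, ideally by arguing that a ``typical'' $S$-avoiding forest has few large trees so that this count is subexponential on average, should upgrade the inequality to a true super-additivity of $\log g_n$ modulo a sublinear error, to which Fekete's lemma applies and yields existence of the limit.

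If the combinatorial route proves too delicate in full generality, my fallback would be the analytic strategy foreshadowed in the paper. One extracts a recursive decomposition of $S$-avoiding trees, for instance by distinguishing the root or an extremal leaf, producing a functional equation (or a system of them) for $T(x)$ stratified by root-label information. Singularity analysis of Flajolet--Odlyzko type, combined with Drmota--Lalley--Woods positivity, would then give $t_n(S) \sim c n^\alpha n! \rho^{-n}$, from which $\lim t_n(S)^{1/n}/n = 1/(e\rho)$ is immediate. The hard part here, presumably why the paper treats only single patterns and other structured families, is establishing that $T(x)$ has a single dominant singularity of tractable type rather than a natural boundary or an accumulation of singularities; this regularity is the uniformly-in-$S$ input that seems unavailable in full generality.
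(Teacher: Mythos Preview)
The statement you are attempting is the full Conjecture~\ref{gp72}, which the paper does \emph{not} prove; the paper establishes only the special case of uncovered sets (Theorem~\ref{fswl}), and the general conjecture remains open. So your proposal should be read against that partial result.

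Your supermultiplicativity route has a genuine gap that the paper explicitly flags. The inequality $f_{m+n}\ge f_mf_n$ from disjoint union is correct but, as you note, loses a factor of $\binom{m+n}{m}$ after normalizing by factorials, and this linear error kills Fekete. Your repair---summing over relabelings and controlling the multiplicity ``in aggregate'' via the heuristic that typical $S$-avoiding forests have few large trees---is precisely the step the paper reports being unable to carry out (see the paragraph preceding the definition of $A(x)$ in Section~\ref{swcpf}). The heuristic is circular: knowing that most trees in a random $S$-avoiding forest are small is itself a structural statement about $f_n(S)$ of the same strength as what you are trying to prove, and it can fail badly for covered sets (e.g.\ $S=\{132,231,321\}$ in Proposition~\ref{otherlim1e}, where $t_{n+1}=o(f_n)$). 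Your analytic fallback is not a proof but an outline; you correctly identify that the missing ingredient is regularity of the dominant singularity of $T(x)$, and you concede this is unavailable in general.

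The paper's actual argument for uncovered $S$ is quite different from either of your sketches and is worth knowing. The key combinatorial input is the single inequality $t_{k+1}\ge f_k$ (Proposition~\ref{tkfk}), obtained by adjoining a root labeled $1$; this makes $A(x)=T'(x)-F(x)$ have nonnegative coefficients. One then solves the Bernoulli ODE $F'/F - F = A$ explicitly as $F=C/(1-D)$ with $C=e^{\int A}$ and $D=\int C$, and approximates $F$ from below by $F_n=C_n/(1-D_n)$ built from the degree-$n$ truncation $A_n$ of $A$. Each $F_n$ is meromorphic with a unique dominant pole at the positive root $r_n$ of $D_n=1$, so its coefficients have an exact exponential rate $1/r_n$; a monotone-convergence argument shows $r_n\downarrow r$, sandwiching $\liminf(f_k/k!)^{1/k}$ and $\limsup(f_k/k!)^{1/k}$ at $1/r$. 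Nothing in your proposal anticipates this mechanism, and in particular the inequality $t_{k+1}\ge f_k$---which is exactly what fails for covered sets and is the reason the general conjecture is open---does not appear.
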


Here, a rooted tree on $[n]$ is just a connected rooted forest on $[n]$. Notably, the finiteness of the limit immediately follows from Cayley's formula: the number of rooted labeled forests and trees are $(n+1)^{n-1}$ and $n^{n-1}$, respectively, so the limit is automatically bounded above by $1$. We resolve this conjecture in the positive for a large class of sets which includes all singleton sets.

\begin{thm}
\label{fswl}
For any set $S$ of patterns in which no $\pi\in S$ begins with $1$ or in which no $\pi\in S$ begins with its largest element, \[\lim_{n\rightarrow\infty}\frac{f_n(S)^{1/n}}n\text{ and }\lim_{n\rightarrow\infty}\frac{t_n(S)^{1/n}}n\] exist and are equal.
\end{thm}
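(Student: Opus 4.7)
My plan is to reduce to a single case via a label-complement symmetry, use the bijection ``add a new maximum root'' to relate $f_n(S)$ to trees rooted at their maximum, derive an almost-supermultiplicative inequality for $f_n(S)/n!$ from a tree-gluing construction, and then invoke a generalized Fekete-type lemma. First, the label complement $v \mapsto n+1-v$ gives a bijection between forests on $[n]$ avoiding $S$ and forests avoiding $S^c := \{\pi^c : \pi \in S\}$, where $\pi^c(i) = |\pi|+1-\pi(i)$; since $\pi$ begins with $1$ if and only if $\pi^c$ begins with its largest element, I may assume without loss of generality that no $\pi \in S$ begins with its largest element. Under this hypothesis, adding a new root labeled $n+1$ to a forest on $[n]$ (with the original roots becoming its children) preserves pattern avoidance, since the new maximum sits at the start of every root-to-leaf path and by hypothesis cannot participate in any $\pi$-occurrence. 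This yields a bijection between forests on $[n]$ avoiding $S$ and trees on $[n+1]$ rooted at $n+1$ avoiding $S$, from which $t_n(S) \leq f_n(S) \leq t_{n+1}(S)$ follows, so it suffices to show $\lim_n f_n(S)^{1/n}/n$ exists.

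For the key inequality, fix $m, n \geq 1$ and consider triples $(T_1, T_2, A)$, where $T_1$ is a tree on $[m]$ rooted at $m$ avoiding $S$, $T_2$ is a tree on $[n]$ rooted at $n$ avoiding $S$, and $A \subseteq [m+n-1]$ is a subset of size $m$ with $m+n-1 \in A$ (yielding $\binom{m+n-2}{m-1}$ choices of $A$). Relabel $T_1$ onto $A$ and $T_2$ onto $[m+n] \setminus A$ order-preservingly (so their roots become $m+n-1$ and $m+n$) and add the edge $\{m+n-1, m+n\}$ to form a tree $T_{\mathrm{new}}$ on $[m+n]$ rooted at $m+n$. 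Any root-to-leaf path in $T_{\mathrm{new}}$ is either a path in the image of $T_2$, or begins with $m+n$ followed by a root-to-leaf path in the image of $T_1$; in the latter case the leading $m+n$ cannot participate in any $\pi$-occurrence by hypothesis, so avoidance is preserved. The map $(T_1, T_2, A) \mapsto T_{\mathrm{new}}$ is injective because $A$ is recovered as the vertex set of the subtree of $T_{\mathrm{new}}$ rooted at $m+n-1$. Combined with the bijection of the first paragraph, which identifies trees on $[N]$ rooted at $N$ avoiding $S$ with forests on $[N-1]$ avoiding $S$, this yields $\binom{p+q}{p} f_p(S) f_q(S) \leq f_{p+q+1}(S)$ after setting $p = m-1$ and $q = n-1$.

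Setting $e_n := f_n(S)/n!$, this becomes $e_p e_q \leq (p+q+1) e_{p+q+1}$, so $\log e_{p+q+1} \geq \log e_p + \log e_q - \log(p+q+1)$. Since $\sum_k (\log k)/k^2$ converges, the generalized Fekete lemma of de Bruijn and Erd\H{o}s (after a trivial shift of index) implies that $(\log e_n)/n$ converges. Standard manipulations with Stirling's formula then convert this into convergence of $f_n(S)^{1/n}/n$, and the bounds $t_n(S) \leq f_n(S) \leq t_{n+1}(S)$ imply the same limit for $t_n(S)^{1/n}/n$. I expect the main technical obstacle to be verifying the tree-gluing construction: both that it preserves pattern avoidance (which depends on the hypothesis on $S$ when analyzing paths through the newly added edge) and that the map is injective (which uses the canonical restriction $m+n-1 \in A$ to uniquely identify which subtree of $T_{\mathrm{new}}$ corresponds to $T_1$).
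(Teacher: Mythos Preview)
Your argument is correct and takes a genuinely different route from the paper. The paper explicitly remarks that it was unable to repair the supermultiplicativity approach for unordered forests, because na\"ively concatenating two $S$-avoiding forests is not injective. You circumvent this by passing to trees rooted at their maximum label (via the ``add a new maximum root'' bijection) and gluing one such tree as a subtree of the other, with the constraint $m+n-1\in A$ pinning the attachment point so that the decomposition is recoverable from the subtree rooted at $m+n-1$. This yields $\binom{p+q}{p}f_pf_q\le f_{p+q+1}$, which after the reindexing $\tilde g_k=-\log(f_{k-1}/(k-1)!)$ becomes the clean subadditive form $\tilde g_{m+n}\le\tilde g_m+\tilde g_n+\log(m+n-1)$, to which the de~Bruijn--Erd\H{o}s lemma applies since $\sum(\log k)/k^2<\infty$.

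By contrast, the paper proceeds analytically: from $F=e^T$ and the key inequality $t_{k+1}\ge f_k$, it writes $T'-F=A$ with $A$ having nonnegative coefficients, solves the resulting Bernoulli ODE to get $F=C/(1-D)$, and approximates $F$ from below by meromorphic functions $F_n$ built from truncations of $A$. The radii of convergence $r_n$ of the $F_n$ decrease to the radius of convergence $r$ of $F$, pinning both $\limsup$ and $\liminf$ of $(f_k/k!)^{1/k}$ to $1/r$. Your proof is more elementary and avoids all of this machinery; the paper's proof has the compensating advantage that the $r_n$ give explicit, computable, convergent lower bounds on the forest Stanley--Wilf limit, which the paper exploits numerically in the following subsection.
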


Our methods are quite different from those previously used to prove the analogous results for permutations (the existence of the Stanley-Wilf limit, which provides a growth rate for permutations avoiding patterns rather than forests avoiding patterns) and relies on analytically interpreting the relationship between $t_n$ and $f_n$, i.e. the forest structure. The proof is quite robust and immediately generalizes to sets $S$ of consecutive, (bi)vincular, or mesh patterns in which the condition in the theorem statement is satisfied, though we will restrict our focus to classical avoidance in this paper. The key use of the pattern avoidance condition is to establish the inequality $t_{n+1}\ge f_n$, after which the rest of the proof is purely analytic. For this reason, we believe that our methods may be applicable to the asymptotic enumeration of other classes of labeled forests, perhaps unrelated to pattern avoidance. Additionally, our proof allows us to compute convergent lower bounds for the forest Stanley-Wilf limits, and we do so for several patterns in Section \ref{asy} with the help of formulas and recurrences shown in \cite{AA, GP}.

The rest of the paper is organized as follows. In Section \ref{def}, we record all of the preliminary definitions that are necessary for the rest of the paper. In Section \ref{asy}, we discuss the asymptotics of pattern-avoiding forests and give the proof of Theorem \ref{fswl} along with computed lower bounds for forest Stanley-Wilf limits. We show that the limit, when it exists, is always in $\{0\}\cup[e^{-1},1]$ and classify the sets that achieve $0$ and the sets covered by Theorem \ref{fswl} that achieve $e^{-1}$. In Section \ref{fut}, we pose questions, conjectures, and potential future directions of research, including some on various limiting statistics of pattern-avoiding forests.

\section{Definitions and Notations}
\label{def}
We begin by defining all of the notions of pattern avoidance and rooted forests that we will use throughout this paper.

\begin{defn}
\label{rfdef}
A \emph{rooted labeled forest} on a set $S$ of integers is a forest on $|S|$ vertices labeled with the elements of $S$ in which every connected component has a distinguished root vertex. Each component then has the structure of an unordered rooted tree whose vertices' children do not have a specified order, and each vertex has a unique label in $S$. In a rooted forest $F$ on $S$, we let $\ell_F(v)$ denote the label of vertex $v$ and suppress the subscript if it is clear from context.
\end{defn}

\emph{Rooted labeled trees} are rooted labeled forests with one component. For the sake of brevity, we will oftentimes refer to rooted labeled trees and forests as trees and forests, respectively, and we will always specify when we refer to other types of trees or forests.

We will make use of some standard terminology for rooted trees and forests. In a rooted tree, the \emph{root} is the distinguished vertex. For each non-root vertex $v$, the \emph{parent} of $v$ is the vertex directly before $v$ in the path from the root to $v$, and every non-root vertex is a \emph{child} of its parent. Each vertex $v$ in the tree has a \emph{depth}, defined as the number of vertices on the path from the root to $v$. For example, the root has depth $1$. The \emph{depth} of a rooted tree $T$ is the maximal depth of a vertex in $T$. All of these terms naturally carry over to rooted forests. 

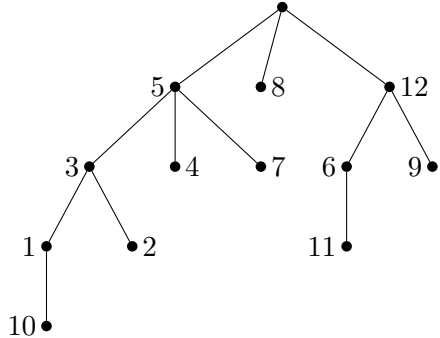
\begin{figure}[ht]
\centering
\forestset{filled circle/.style={
      circle,
      text width=4pt,
      fill,
    },}
\begin{forest}
for tree={filled circle, inner sep = 0pt, outer sep = 0 pt, s sep = 1 cm}
[, 
    [, edge label={node[left]{5}}
        [, edge label={node[left]{3}}
            [, edge label={node[left]{1}}
                [, edge label={node[left]{10}}
                ]
            ]
            [, edge label={node[right]{2}}
            ]
        ]
        [, edge label={node[right]{4}}
        ]
        [, edge label={node[right]{7}}
        ]
    ]
    [, edge label={node[right]{8}}
    ]
    [, edge label={node[right]{12}}
        [, edge label={node[left]{6}}
            [, edge label={node[left]{11}}
            ]
        ]
        [, edge label={node[left]{9}}
        ]
    ]
]
\end{forest}
\caption{A rooted labeled forest on $[12]$. The root vertex is drawn but not in the forest. We generally draw forests so that the labels of the children of every vertex are sorted in increasing order. When we draw rooted forests, we will connect the roots of each connected component to an extra unlabeled vertex and refer to this vertex as the \emph{root} of the forest. The root of the forest can be thought of as the parent of the roots of its connected components, though it is only drawn for visualization purposes and is not actually in the forest or counted when computing the depth of a vertex. In our drawings of rooted forests and trees, the root will be drawn at the top and each vertex will be drawn above its children. In this way, the path from a vertex to any of its descendants is a downward path.}
\label{rlbfig}
\end{figure}

We view rooted trees as trees with a distinguished vertex and rooted forests as a set of rooted trees. Thus, forests may be empty (have $0$ vertices), but trees will always be nonempty. An \emph{increasing} forest is a rooted labeled forest in which every vertex has a smaller label than its children so that the sequence of labels along any downward path in the forest is increasing. An increasing tree is an increasing forest with one component, and we can define decreasing forests and trees analogously.

\begin{defn}
\label{instdef}
An \emph{instance} of a pattern $\pi=\pi(1)\cdots\pi(k)$ in a rooted forest $F$ is a sequence of vertices $v_1,\ldots,v_k$ such that $v_i$ is an ancestor of $v_{i+1}$ for all $1\le i<k$ and $L(v_1),\ldots,L(v_k)$ is in the same relative order as $\pi$. We define a \emph{consecutive instance} in the same way, except we require that $v_i$ is a parent of $v_{i+1}$ instead of an ancestor so that $v_1,\ldots,v_k$ forms a downward path in the forest.
\end{defn}

\begin{defn}
\label{avodef}
A forest $F$ \emph{\emph{(}consecutively\emph{)} contains} a pattern $\pi$ if there exists a (consecutive) instance of $\pi$ in $F$, and it \emph{\emph{(}consecutively\emph{)} avoids} a set $S$ of patterns if it does not contain any (consecutive) instance of $\pi$ for any $\pi\in S$.
\end{defn}

We will oftentimes drop the braces when referring to containing or avoiding a specific singleton set $S$. The word classically may be used to describe non-consecutive avoidance or containment, so to classically avoid a set is to avoid a set as in Definition \ref{avodef}, without the parentheses. For example, the forest in Figure \ref{rlbfig} contains $213$ through the instance $5,3,10$, and it consecutively contains $312$ through the consecutive instance $12,6,11$. It avoids $123$ and consecutively avoids $213$, but it does not classically avoid $213$. A forest $F$ avoiding a set $S$ of patterns can be viewed as having the property that for every path from the root of $F$ to a leaf of $F$, the sequence of labels avoids $S$ in the sense of pattern avoidance for permutations and sequences.

For a pattern $\pi=\pi(1)\cdots\pi(k)$, we define its \emph{complement} to be the pattern $\overline\pi=k+1-\pi(1),\ldots,k+1-\pi(k)$. In other words, the complement is obtained by inverting the order of the elements. As noted in \cite[Proposition 1]{AA}, given a rooted forest $F$ on $[n]$, we may consider the rooted forest $\overline F$ defined as follows: the underlying unlabeled forest structure will be the same, but $\ell_{\overline F}(v)=n+1-\ell_F(v)$ for all vertices $v$. In other words, for all $a\in[n]$, we switch the labels $a$ and $n+1-a$. Note that any instance of a pattern $\pi$ in $F$ will become an instance of $\overline\pi$ in $\overline F$ under this relabelling, so the number of forests on $n$ vertices avoiding $\{\pi_1,\ldots,\pi_m\}$ is equal to the number of forests on $n$ vertices avoiding $\{\overline\pi_1,\ldots,\overline\pi_m\}$ for any integer $n$ and set $\{\pi_1,\ldots,\pi_m\}$ of patterns.

\section{Forest Stanley-Wilf Limits}
\label{asy}
In this section, we discuss the asymptotics of $f_n(S)$ and $t_n(S)$, the number of forests and trees, respectively, on $[n]$ that avoid a set $S$ of patterns. Our main focus will be on classical avoidance, though we will make a few remarks on how to modify our techniques and results to deal with consecutive avoidance and other types of pattern avoidance as well.

In Subsection \ref{swcpf}, we prove Theorem \ref{fswl} using analytic techniques and describe how our result can be applied to pattern avoidance in labeled forests in a very general sense. Subsection \ref{swlc} discusses the problem of determining the forest Stanley-Wilf limit of a given set $S$ of patterns, mostly those sets of patterns covered by Theorem \ref{fswl}.

We will be working closely with the exponential generating functions $F_S(x)=\sum_{n\ge0}f_n(S)x^n/n!$ and $T_S(x)=\sum_{n\ge0}t_n(S)x^n/n!$ of $f_n(S)$ and $t_n(S)$. We generally only consider one set $S$ at a time, so we will often suppress $S$ from the notation and write $f_n,t_n,F(x),T(x)$ instead. We make use of many basic properties of exponential generating functions of labeled combinatorial classes, such as the fact that $F(x)=e^{T(x)}$ since a forest that avoids $S$ is a set of trees avoiding $S$. Formally, if $\mathcal F$ is the class of rooted labeled forests avoiding $S$ and $\mathcal T$ is the class of rooted labeled trees avoiding $S$, then $\mathcal F=\mathsf{SET}(\mathcal T)$. We refer readers unfamiliar with the theory of labeled combinatorial classes and analytic combinatorics to \cite{FS} for a comprehensive treatise.

Before we move on to our proofs, we record the following definition, which is made primarily for the sake of brevity in the later arguments.

\begin{defn}
\label{coverdef}
A set $S$ of patterns is \emph{covered} if it contains two patterns $\pi=\pi(1)\cdots\pi(k)$ and $\sigma=\sigma(1)\cdots\sigma(\ell)$ with $\pi(1)=1$ and $\sigma(1)=\ell$. Otherwise, $S$ is said to be \emph{uncovered}.
\end{defn}

In other words, $S$ is covered if it contains a pattern that begins with its lowest element and a pattern that begins with its highest element, which in particular means that singleton sets $S$ are uncovered (we will ignore the pattern $1$ as only the empty forest avoids it). Here, the word covered refers to the fact that the two ``ends'' of the possible values of the first element, the highest and lowest number, both appear. Note that Theorem \ref{fswl} states that the forest Stanley-Wilf limit exists for uncovered $S$. By complementation, when working with uncovered $S$ we may assume that no patterns in $S$ begin with $1$. For a covered set $S$ of patterns, there is a natural injection from forests on $[n]$ avoiding $S$ to trees on $[n+1]$ avoiding $S$ by adding a new parent of all roots in the forest whose label is either smaller or larger than all other labels. This will be a key property for showing the existence of the forest Stanley-Wilf limit.

\subsection{The forest Stanley-Wilf conjecture for uncovered sets}
\label{swcpf}
\hspace*{\fill} \\
In this subsection, we prove Theorem \ref{fswl}, which states that for any uncovered $S$ the forest Stanley-Wilf limit $\lim_{n\rightarrow\infty}f_n^{1/n}/n$ exists and is finite. We first make a few remarks before giving our proof.

In contrast with the Stanley-Wilf conjecture for permutations, the main difficulty lies not in the finiteness but in the existence of the limit. Indeed, the total number of rooted labeled forests on $[n]$ is $(n+1)^{n-1}$ by Cayley's formula, which in particular implies that $\limsup_{n\rightarrow\infty}f_n^{1/n}/n\le1$. Under the assumption that no patterns in $S$ begin with $1$, note that if $21\in S$, then the other patterns in $S$ are superfluous as they contain all $21$. In this case, $f_n=n!$ \cite{AA}, and we already know by Stirling's approximation that $\lim_{n\rightarrow\infty}(n!)^{1/n}/n=e^{-1}$. Henceforth, we will assume that no patterns in $S$ begin with $1$ and that $21\notin S$. Another consequence of this limit is that it instead suffices to show the existence of $\lim_{n\rightarrow\infty}\left(f_n/n!\right)^{1/n}=e\lim_{n\rightarrow\infty}f_n^{1/n}/n$ instead, which we do using the exponential generating function of $f_n$.

The supermultiplicativity argument given by Arratia in \cite{A} for the existence of the limit for permutations does not easily extend to forests. The analogous supermultiplicativity inequality would be $f_{m+n}/((m+n)!)\ge(f_m/m!)\cdot(f_n/n!)$, or $f_{m+n}\ge\binom{m+n}mf_mf_n$. We remark that for \emph{ordered} rooted forests, this inequality follows from the observation that one can obtain an ordered forest $F$ on $[m+n]$ avoiding $S$ by choosing an $m$-element subset $A$ of $[m+n]$ and merging an ordered forest $F_A$ on $A$ avoiding $S$ with an ordered forest $F_B$ on $B=[m+n]\setminus A$ avoiding $S$. We do so by placing the trees in $F_A$ before the trees in $F_B$ in the ordering of the trees in $F$. This does not work for unordered forests because the construction is not injective; the order of the trees in the forest no longer matters so we cannot say that every choice of $A,F_A,F_B$ results in a different forest on $[m+n]$. We were unable to repair this argument for unordered forests, despite the fact that data suggests that the inequality still holds.

We now make some definitions relevant to our proof. As before, let $T(x)=\sum_{k\ge0}t_kx^k/k!$ and $F(x)=\sum_{k\ge0}f_kx^k/k!$ be the exponential generating functions of $t_n$ and $f_n$. Let \[A(x)=T'(x)-F(x)=\frac{F'(x)}{F(x)}-F(x)=\sum_{k\ge0}\frac{(t_{k+1}-f_k)x^k}{k!},\] \[B(x)=\int_0^xA(t)dt,\] \[C(x)=e^{B(x)},\] \[D(x)=\int_0^xC(t)dt.\]

For positive integers $n$, let \[A_n(x)=\sum_{0\le k\le n}\frac{(t_{k+1}-f_k)x^k}{k!},\] \[B_n(x)=\int_0^xA_n(t)dt,\] \[C_n(x)=e^{B_n(x)},\] \[D_n(x)=\int_0^xC_n(t)dt,\] \[F_n(x)=\frac{C_n(x)}{1-D_n(x)}.\] Given nonnegative integers $0=a_0,a_1,\ldots,a_n$, where $a_k$ represents the number of objects of size $k$ in a labeled combinatorial class $\mathcal C$, let $E(a_0,\ldots,a_n)$ denote the number of objects in $\mathsf{SET}(\mathcal C)$ of size $n$. Note that $E(a_0,\ldots,a_n)/n!$ is the coefficient of $x^n$ in $\exp\left(\sum_{k=1}^na_kx^k/k!\right)$ and that $f_n=E(t_0,\ldots,t_n)$ by construction.

The rough idea in our proof is that $\limsup_{n\rightarrow\infty}\left(f_n/n!\right)^{1/n}$ is the reciprocal of the radius of convergence of $F$. To control $\liminf_{n\rightarrow\infty}\left(f_n/n!\right)^{1/n}$, we approximate $F$ from below by a series of functions $F_m$ with the property that $F_m(z)$, viewed as a function in a complex variable $z$ of a sufficiently small magnitude, admits a meromorphic continuation to the entire complex plane. By \cite[Theorem IV.10]{FS}, the corresponding limit for the coefficients of $F_m$ exist, and this limit is a lower bound for $\liminf_{n\rightarrow\infty}\left(f_n/n!\right)^{1/n}$. The theorem then follows by showing that $F_m$ tends to $F$ in an appropriate sense. The first step is to make a combinatorial observation about the coefficients of $F$ and $T$, using the condition that $S$ is uncovered. The remainder of the proof after Proposition \ref{tkfk} is essentially purely analytic.

\begin{prop}
\label{tkfk}
The inequality $t_{k+1}\ge f_k$ holds for all $k$. Thus, all of the coefficients of $A$ are nonnegative.
\end{prop}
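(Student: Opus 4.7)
The plan is to establish the inequality $t_{k+1}\ge f_k$ via an explicit injection from rooted labeled forests on $[k]$ avoiding $S$ into rooted labeled trees on $[k+1]$ avoiding $S$; the nonnegativity of the coefficients of $A$ then follows at once from the formula $A(x)=\sum_{k\ge 0}(t_{k+1}-f_k)x^k/k!$.

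Since $S$ is uncovered, after possibly replacing $S$ by its complement (which preserves both $f_k$ and $t_{k+1}$ by the relabelling symmetry noted at the end of Section \ref{def}) I may assume that no pattern in $S$ begins with $1$. Given a rooted labeled forest $F$ on $[k]$ avoiding $S$, the map I would use is: shift every label of $F$ up by $1$ to produce a forest on $\{2,\ldots,k+1\}$ that still avoids $S$ (since a monotone relabelling preserves the relative order along every root-to-leaf path), and then adjoin a single new vertex with label $1$ and declare it to be the parent of every root of the shifted forest. This yields a rooted labeled tree $T$ on $[k+1]$. Injectivity is immediate: from $T$ one recovers $F$ by deleting its root and subtracting $1$ from every remaining label.

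The crux is to verify that $T$ avoids $S$. Any instance of a pattern $\pi\in S$ in $T$ is a sequence of vertices $v_1,\ldots,v_m$ with each $v_i$ an ancestor of $v_{i+1}$. If the new root appears among the $v_i$, then it must be $v_1$, since it is an ancestor of every vertex of $T$ and a descendant of none; but then $v_1$ carries the smallest label among the $v_i$, forcing $\pi(1)=1$ and contradicting the standing assumption on $S$. Otherwise the instance lies entirely in the shifted copy of $F$, which avoids $S$ by construction, so no instance exists there either.

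I expect no substantive obstacle beyond this verification, and it is precisely here that the uncovered hypothesis enters: the construction is just the natural ``attach a new minimum root'' operation, and the content of the proposition is that being uncovered is exactly the condition needed to keep this new root from creating a forbidden pattern instance.
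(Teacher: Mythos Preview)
Your proof is correct and follows essentially the same approach as the paper: both construct the injection by shifting all labels up by $1$ and adjoining a new root labeled $1$, using the assumption (obtained via complementation) that no pattern in $S$ begins with $1$. Your write-up simply spells out the verification that the resulting tree avoids $S$ in more detail than the paper does.
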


\begin{proof}
Because none of the patterns in $S$ start with $1$, any forest on $[k]$ avoiding $S$ can be turned into a tree on $[k+1]$ avoiding $S$ by increasing all labels by $1$ and attaching the root of each tree in the forest to a new root vertex labeled $1$. This operation is clearly injective, so the proposition follows.
\end{proof}

\begin{lem}
\label{rntor}
There exist unique positive real numbers $r_n$ such that $D_n(r_n)=1$. Furthermore, the sequence $r_1,r_2,\ldots$ is nonincreasing with limit $r=\sup\{t:D(t)\le1\}$.
\end{lem}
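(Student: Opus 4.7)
The plan is to propagate the fact that $A_n$ has nonnegative coefficients (Proposition \ref{tkfk}) through the definitions of $B_n$, $C_n$, $D_n$ in order to establish that $D_n$ is a strictly increasing continuous bijection of $[0, \infty)$, and then compare $D_n$ to its limit $D$ via monotone convergence. For $n\ge 1$, $B_n$ is a nonconstant polynomial with nonnegative coefficients (the $x^2$-coefficient is $(t_2 - f_1)/2 = 1/2$), so $C_n = e^{B_n}$ is continuous with $C_n \ge 1$ on $[0, \infty)$ and $C_n(x) \to \infty$ as $x \to \infty$. Consequently $D_n$ is strictly increasing and continuous on $[0, \infty)$, with $D_n(0) = 0$ and $D_n(x) \to \infty$, so the intermediate value theorem produces a unique $r_n > 0$ with $D_n(r_n) = 1$. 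Monotonicity of the sequence $r_n$ follows from
\[
A_{n+1}(x) - A_n(x) = \frac{(t_{n+2} - f_{n+1}) x^{n+1}}{(n+1)!} \ge 0
\]
for $x \ge 0$ (by Proposition \ref{tkfk} again), which after integration and exponentiation yields $D_n \le D_{n+1}$ pointwise on $[0,\infty)$; combined with
\[
D_{n+1}(r_n) \ge D_n(r_n) = 1 = D_{n+1}(r_{n+1})
\]
and strict monotonicity of $D_{n+1}$, this gives $r_{n+1} \le r_n$. Hence $r^* := \lim_{n \to \infty} r_n$ exists.

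To identify $r^*$ with $r$, the crucial observation is that $B_n$ is the $n$-th partial sum of the power series for $B$, so $B_n(x) \nearrow B(x)$ pointwise on $[0, R_B)$, where $R_B$ is the radius of convergence of $B$; by monotone convergence this propagates to $C_n \nearrow C$ and $D_n(t) \nearrow D(t)$ on $[0, R_B)$. For $r^* \le r$, fix $t < r^*$. Then $t < r_n$ for every $n$, so $D_n(t) < 1$; a short auxiliary argument shows that if $t > R_B$ then $B_n(t) \to \infty$, which forces $C_n$ to blow up on $[R_B, t]$ and hence $D_n(t) \to \infty$, contradicting $D_n(t) < 1$. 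Thus $t \le R_B$, and $D(t) = \lim_{n \to \infty} D_n(t) \le 1$, so $t \le r$; letting $t \to r^{*-}$ gives $r^* \le r$. For $r \le r^*$, fix $t < r$. Using that $D$ is strictly increasing and continuous on its domain and $r = \sup\{s : D(s) \le 1\}$, we have $D(t) < 1$, so $D_n(t) < 1$ for $n$ large, whence $r_n > t$ for such $n$ and $r^* \ge t$. Letting $t \to r^-$ completes the proof.

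The principal obstacle is analytic rather than combinatorial: verifying that $D(t) < 1$ strictly inside $[0, r)$ (which uses strict monotonicity of $D$ together with continuity at the boundary of its domain) and that $D_n(t) \to \infty$ whenever $t$ exceeds the radius of convergence of $B$. With these two points secured, the argument reduces to routine applications of the intermediate value theorem and monotone convergence.
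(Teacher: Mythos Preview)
Your proof is correct and follows essentially the same approach as the paper: both arguments propagate the nonnegativity of the coefficients of $A$ through $B_n,C_n,D_n$ to get strict monotonicity, invoke the intermediate value theorem for existence and uniqueness of $r_n$, and then use monotone convergence of $D_n\nearrow D$ to identify the limit. The only cosmetic difference is that the paper organizes the identification of $\lim r_n$ with $r$ via a case split on whether $R$ is finite and whether $r=R$, whereas you argue the two inequalities $r^*\le r$ and $r\le r^*$ directly; the underlying analytic content is the same.
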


This limit $r$ will determine the forest Stanley-Wilf limit as the radius of convergence for $F(x)$, and the $r_n$ are increasing approximations to $r$ as the radii of convergence of $F_n(x)$.

\begin{proof}
Since all of the coefficients of $A$ are nonnegative, so are all of the coefficients of $B,C,D$ as they are constructed from $A$ using integration and exponentiation. The same is true for $A_n,B_n,C_n,D_n$. By our assumption that $S$ does not contain $21$, we know that $t_2=2$. Since $f_1=1$, the coefficient of $x$ in $A(x)$ is equal to $1$. In particular, this means that $A_n,B_n,C_n,D_n$ each have a strictly positive non-constant coefficient, so they are strictly increasing functions in $x$ that tend to infinity since $A_n$ is a polynomial. We note here that $A(x),B(x),C(x),D(x)$ are defined on $x\in[0,R)$ where $R$ is their common radius of convergence. By Cayley's formula, $t_{k+1}-f_k\le t_{k+1}\le(k+1)^k$, so $R^{-1}\le\lim_{k\rightarrow\infty}\left((k+1)^k/k!\right)^{1/k}=e$ and $R$ is positive.

Since $A_n,B_n,C_n,D_n$ are finite on $[0,\infty)$, are strictly increasing, and tend to infinity, by the fact that $D_n(0)=0$ we know that the $r_n$ exist and are unique. Furthermore, since $D_1,D_2,\ldots$ is pointwise nondecreasing, $r_1,r_2,\ldots$ is nonincreasing. Note that $A$ is the pointwise increasing limit of $A_n$ on $[0,\infty)$, so $B$ is the pointwise increasing limit of $B_n$ on $[0,\infty)$ by the integral monotone convergence theorem. It then follows that $C$ is the pointwise increasing limit of $C_n$ on $[0,\infty)$, so $D$ is the pointwise increasing limit of $D_n$ on $[0,\infty)$ as well, again by the integral monotone convergence theorem. If $R=\infty$, then $D(x)$ is defined for all $x\ge0$ and tends to infinity. Hence, $D(r)=1$, $r_n$ approaches $r$ from above, and $D_n$ approaches $D$ from below pointwise.

Suppose that $R<\infty$. Note that $r\le R$. We now split into two cases, depending on whether $r<R$. If $r=R$, then it suffices to show that $r_n$ has limit $R$. Since $D_n\uparrow D$ and $R=\sup\{t:D(t)\le1\}$, $D_n(R)<1$ for all $n$. On the other hand, for all $\epsilon>0$ and sufficiently large $n$, $D_n(R+\epsilon)>1$ since $D(R+\epsilon)=\infty$. Thus, for sufficiently large $n$, $r_n<R+\epsilon$ while $r_n>R$, so $\lim_{n\rightarrow\infty}r_n=R=r$, as desired. If $r<R$, then we know that $D(r)=1$ and for some $\epsilon>0$, $D(R-\epsilon)>1$. Since $D_n\uparrow D$, for sufficiently large $n$, $D_n(R-\epsilon)>1$ and $r_n<R-\epsilon$, and by restricting to the interval $[0,R-\epsilon]$ the result is clear.
\end{proof}

\begin{prop}
\label{fnode}
The differential equation $G'(x)/G(x)-G(x)=A_n(x)$ with initial condition $G(0)=1$ has $F_n(x)$ as a unique solution.
\end{prop}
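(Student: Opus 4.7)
The plan is to treat this as a standard ODE problem in two parts: first, directly verify that $F_n(x) = C_n(x)/(1-D_n(x))$ satisfies both the differential equation and the initial condition, and second, establish uniqueness by linearizing the equation via a Bernoulli substitution.

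For the verification step, I would first observe that $B_n(0) = D_n(0) = 0$ gives $C_n(0) = 1$ and hence $F_n(0) = 1$. Then, taking the logarithmic derivative of $F_n = C_n/(1-D_n)$ yields
\[\frac{F_n'(x)}{F_n(x)} = \frac{C_n'(x)}{C_n(x)} + \frac{D_n'(x)}{1-D_n(x)} = B_n'(x) + \frac{C_n(x)}{1-D_n(x)} = A_n(x) + F_n(x),\]
using $C_n = e^{B_n}$, $D_n' = C_n$, and $B_n' = A_n$. Rearranging gives the desired ODE. This step is purely mechanical given how the auxiliary functions $A_n, B_n, C_n, D_n$ were defined.

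For uniqueness, I would rewrite the ODE as $G'(x) = A_n(x)G(x) + G(x)^2$, which is a Bernoulli equation of degree $2$. The substitution $H = 1/G$, valid locally near $0$ since $G(0) = 1$, converts this into the linear first-order ODE $H'(x) + A_n(x)H(x) = -1$ with $H(0) = 1$. Multiplying by the integrating factor $e^{B_n(x)} = C_n(x)$ gives $(C_n H)' = -C_n$, which integrates (after applying the initial condition) to $C_n(x) H(x) = 1 - D_n(x)$. Inverting yields $G = C_n/(1-D_n) = F_n$, establishing uniqueness.

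The only real subtlety is the domain of validity of the substitution $H = 1/G$, which requires $G \neq 0$. Since $A_n$ is a polynomial, Picard--Lindel\"of guarantees a unique analytic solution in a neighborhood of $0$, and our explicit candidate $F_n$ is analytic on $[0, r_n)$ where $1 - D_n > 0$ by Lemma \ref{rntor}; the two solutions must therefore agree on this interval. I do not anticipate any serious obstacle, as the proposition is essentially a routine Bernoulli ODE manipulation made transparent by the way $A_n, B_n, C_n, D_n$ were constructed.
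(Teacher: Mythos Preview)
Your proposal is correct and takes essentially the same approach as the paper: both recognize the equation as a Bernoulli differential equation, verify that $F_n$ satisfies it with the right initial condition, and invoke uniqueness for such equations. Your version is simply more explicit, carrying out the standard $H=1/G$ substitution and integrating-factor computation that the paper alludes to with the remark that ``our construction of $F_n$ follows the solution of the Bernoulli differential equation.''
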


\begin{proof}
Rewriting the differential equation as $G'(x)=G(x)^2+G(x)A_n(x)$, we obtain a Bernoulli differential equation which has a unique solution with the initial condition $G(0)=1$. It is easy to verify that $F_n(0)=1$ and that $F_n$ satisfies this differential equation (in fact our construction of $F_n$ follows the solution of the Bernoulli differential equation).
\end{proof}

\begin{lem}
\label{fnmero}
For some $\epsilon_n>0$, $F_n(z)$ as a function of a complex variable $z$ is meromorphic on $\{z:|z|< r_n+\epsilon_n\}$, with its only pole in this disk at $z=r_n$.
\end{lem}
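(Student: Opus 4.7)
The approach is to use the explicit formula $F_n(z) = C_n(z)/(1-D_n(z))$ and study the zero set of $1-D_n$ in the complex plane. Since $B_n$ is a polynomial, $C_n = e^{B_n}$ is entire and nowhere zero, and hence $D_n(z) = \int_0^z C_n(t)\,dt$ is entire as well. Therefore $F_n$ is meromorphic on all of $\mathbb{C}$, and its poles coincide with the zeros of $1-D_n$. The task reduces to showing that on some disk $\{|z| < r_n+\epsilon_n\}$ the only zero of $1-D_n$ is $z=r_n$.

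By Proposition \ref{tkfk}, $A_n$ has nonnegative real Taylor coefficients, and hence so do $B_n$, $C_n$, and $D_n$. The triangle inequality then gives $|D_n(z)| \le D_n(|z|)$ for all $z$, so on the closed disk $|z|\le r_n$ we have $|D_n(z)| \le D_n(r_n) = 1$, with strict inequality whenever $|z|<r_n$ by the strict monotonicity of $D_n$ on $[0,\infty)$ established in Lemma \ref{rntor}. Consequently every zero of $1-D_n$ inside $|z|\le r_n$ must lie on the circle $|z|=r_n$.

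The heart of the argument is the equality case of this triangle inequality, which I would use to pin down the boundary zeros. Writing $D_n(z) = \sum_{k\ge1} d_k z^k$ with $d_k \ge 0$, the equality $|D_n(z)| = D_n(|z|)$ forces all nonzero terms $d_k z^k$ to share a common argument. Since the linear coefficient $d_1 = C_n(0) = 1$ is strictly positive, that common argument must be $0$, so $z$ must be a nonnegative real, and on $|z|=r_n$ this forces $z=r_n$. Furthermore, $r_n$ is a simple zero of $1-D_n$ because $(1-D_n)'(r_n) = -C_n(r_n) < 0$.

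To produce the slack $\epsilon_n > 0$, I would appeal to the fact that the zero set of the nontrivial entire function $1-D_n$ is discrete. Setting $\rho = \inf\{|z| : D_n(z)=1,\; z\ne r_n\}$, with the convention $\rho = +\infty$ if no such $z$ exists, the previous paragraph yields $\rho > r_n$, so any $\epsilon_n \in (0, \rho - r_n)$ works. I expect the main (and essentially only) nontrivial step to be the equality-case analysis in the triangle inequality, since that is where the combinatorial positivity from Proposition \ref{tkfk} is crucially used; the rest is routine complex analysis of entire functions.
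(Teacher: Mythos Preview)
Your proof is correct and follows essentially the same approach as the paper: both use the nonnegativity of the Taylor coefficients together with the triangle inequality to exclude zeros of $1-D_n$ in $|z|\le r_n$ other than $r_n$, and then invoke discreteness of the zero set (the paper phrases this as finitely many zeros in the compact disk $|z|\le 2r_n$) to produce $\epsilon_n$. Your explicit treatment of the equality case on the circle $|z|=r_n$, using that $d_1=C_n(0)=1>0$, spells out a step the paper leaves implicit in the phrase ``by the triangle inequality.''
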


\begin{proof}
By construction, $A_n,B_n,C_n,D_n$ are all entire, and their series expansions around $z=0$ have all nonnegative real coefficients with at least one positive coefficient, so $F_n=C_n/(1-D_n)$ is meromorphic and nonconstant. Since $C_n(r_n)>0$ and $|D_n(z)|<1$ for $|z|<r_n$ due to the nonnegative real coefficients, $F_n$ has a pole at $r_n$ and no other poles in $|z|\le r_n$ by the triangle inequality. The only poles are at roots of $D_n(z)=1$, of which there are only finitely many in the compact set $|z|\le2r_n$. It follows that for some $\epsilon_n>0$, $r_n$ is the only pole in $|z|<r_n+\epsilon_n$, as desired.
\end{proof}

\begin{proof}[Proof of Theorem \ref{fswl}]
We will show that $\limsup_{k\rightarrow\infty}\left(f_k/k!\right)^{1/k}\le1/r$ and $\liminf_{k\rightarrow\infty}\left(f_k/k!\right)^{1/k}\ge 1/r_n$ for all $n$. Then, $\lim_{k\rightarrow\infty}\left(f_k/k!\right)^{1/k}=1/r$, so Stirling's approximation gives $\lim_{n\rightarrow\infty}f_n^{1/n}/n=1/(re)$.

Note that $F(x)$ solves the differential equation $G'(x)=G(x)^2+G(x)A(x)$ with initial condition $G(0)=1$, which is a Bernoulli differential equation with unique solution $C(x)/(1-D(x))$ by construction. Thus, $F(x)=C(x)/(1-D(x))$.

Now, we show that $F(x)$ converges for $x\in[0,r)$. Recall that $r\le R$, where $R$ is the common radius of convergence of $A,B,C,D$. Thus, since $F(x)=C(x)/(1-D(x))$, $F(x)$ converges as long as $x\in[0,R)$ and $D(x)<1$. By definition, $F(x)$ converges for $x\in[0,r)$. It thus follows that the radius of convergence of $F$ is at least $r$, so $\limsup_{k\rightarrow\infty}\left(f_k/k!\right)^{1/k}\le1/r$, as desired.

Finally, let $F_n(x)=\sum_{k\ge0}a_kx^k/k!$. Note that since $F_n(z)$ is meromorphic on $|z|<r_n+\epsilon_n$ with its only pole at $r_n$, $\lim_{k\rightarrow\infty}\left(a_k/k!\right)^{1/k}=r_n$ by \cite[Theorem IV.10]{FS}. Thus, it suffices to show that $f_k\ge a_k$ for all $k$. Let $T_n(x)=\log F_n(x)$, so $F_n(x)=e^{T_n(x)}$ and $T_n(0)=0$ since $F_n(0)=1$. The differential equation $F_n'(x)/F_n(x)-F_n(x)=A_n(x)$ can then be rewritten as $T_n'(x)-e^{T_n(x)}=A_n(x)$. Suppose that $T_n(x)=\sum_{k\ge0}b_kx^k/k!$. Equating coefficients in the differential equation, $b_{k+1}-E(b_0,\ldots,b_k)=t_{k+1}-f_k$ for $k\le n$ and $b_{k+1}-E(b_0,\ldots,b_k)=0$ for $k>n$. But we know that $t_{k+1}-E(t_0,\ldots,t_k)=t_{k+1}-f_k$ for all $k$ and $t_0=b_0=0$, so $t_k=b_k$ and thus $f_k=E(t_0,\ldots,t_k)=E(b_0,\ldots,b_k)=a_k$ for all $k\le n+1$ by strong induction. For $k> n+1$, we proceed using strong induction to show that $t_k\ge b_k$ and $f_k\ge a_k$, with the base case of $k\le n+1$ already shown. For the inductive step, $b_{k+1}=E(b_0,\ldots,b_k)=a_k\le f_k\le t_{k+1}$ by Proposition \ref{tkfk} and $a_{k+1}=E(b_0,\ldots,b_{k+1})\le E(t_0,\ldots,t_{k+1})=f_{k+1}$, where we are using the monotonicity of $E$ for nonnegative inputs, so we are done.

To see that $\lim_{n\rightarrow\infty}f_n^{1/n}/n=\lim_{n\rightarrow\infty}t_n^{1/n}/n$, we again make use of the inequality $t_{k+1}\ge f_k$, which gives $f_{k-1}\le t_k\le f_k$. Taking $k$th roots, dividing by $k$, and taking the limit yields $\lim_{k\rightarrow\infty}f_{k-1}^{1/k}/k\le\lim_{k\rightarrow\infty}t_k^{1/k}/k\le\lim_{k\rightarrow\infty}f_k^{1/k}/k$. But $\lim_{k\rightarrow\infty}f_{k-1}^{1/(k(k-1))}=1$ as $f_{k-1}\le(k-2)^k$. Thus, $\lim_{k\rightarrow\infty}f_{k-1}^{1/k}/k=\lim_{k\rightarrow\infty}f_{k-1}^{1/(k-1)}/(k-1)=\lim_{k\rightarrow\infty}f_k^{1/k}/k$, and the result follows.
\end{proof}

This proof allows us to compute convergent lower bounds for the Stanley-Wilf limit for $S$, which we discuss in Subsection \ref{swlc}.

\begin{rem}
\label{crem}
The condition $t_{k+1}\ge f_k$ that we used may be replaced by $t_{k+1}\ge cf_k$ for any $c>0$, and the proof is essentially the same with minor modifications. However, this condition seems to be difficult to show for covered sets $S$ even for $c<1$, and the easy proof of Proposition \ref{tkfk} does not carry over. As long as $f_n=O(t_{n+1})$, the forest Stanley-Wilf limit exists. We believe that the limit also exists when $t_{n+1}=o(f_n)$ but that there are fundamental differences between sets $S$ satisfying $f_n=O(t_{n+1})$ and sets $S$ satisfying $t_{n+1}=o(f_n)$. We will remark more on these differences in Subsection \ref{swlc} and Section \ref{fut}.
\end{rem}

\begin{rem}
\label{fncombo}
The series $F_n$ we used to approximate $F$ from below has a combinatorial interpretation. One viewpoint, essentially given in the proof of Theorem \ref{fswl}, is that we initially force equality to hold in $t_{k+1}\ge f_k$ for all $k$, and then we iteratively replace $t_n$ with its true value for all $n$ (note that $f_n$ is determined by $t_1,\ldots,t_n$). In this way, the coefficients of $F_n$ agree with the coefficients of $F$ up to $x^n$, and as $n\rightarrow\infty$, $F_n$ converges coefficientwise to $F$. However, we can also view $F_n$ as the exponential generating function of the combinatorial class $\mathscr F_n$ of forests that avoid $S$ along with the stronger condition that every vertex with more than $n$ descendants has the smallest label among all of its descendants. The asymptotics for such forests in $\mathscr F_n$ are given by $r_n$, which converge to $r$ as $n\rightarrow\infty$ by our proof. Heuristically, $\mathscr F_n$ forms a good approximation for $\mathscr F$ because in a typical forest, we expect most vertices to not have too many descendants. Furthermore, if a vertex has many descendants, than in order to avoid $S$ it is intuitively more efficient for $S$ to have a small label since the patterns in $S$ do not start with $1$, especially if $S$ contains many patterns. This relates to the rate at which $r_n$ converges to $r$, which our proof gives no insight into.
\end{rem}

\begin{rem}
\label{robust}
Throughout our proof, the pattern avoidance condition is only relevant for Proposition \ref{tkfk} to establish $t_{k+1}\ge f_k$, and after that the proof relies on the analytic interpretation of the relation between the trees and forests in a combinatorial class of rooted labeled forests. Consequently, the proof is quite robust and immediately generalizes to give forest Stanley-Wilf limits for avoiding consecutive patterns, (bi)vincular patterns, mesh patterns, any type of pattern in which the smallest element does not come first, and arbitrary combinations thereof. The limit's existence is not driven by the pattern avoidance, but rather by the tree-forest structure in the combinatorial class. Thus, we believe that our techniques may also be useful in asymptotically enumerating other types of rooted labeled forests that may be unrelated to pattern avoidance.
\end{rem}

\subsection{Determining forest Stanley-Wilf limits}
\label{swlc}
\hspace*{\fill} \\
We now turn to the problem of finding the value of the forest Stanley-Wilf limit for a given set $S$ of patterns. Much of our work in this subsection also applies to asymptotics for consecutive-, (bi)vincular-, or mesh-pattern-avoiding forests, and we leave such computations to the interested reader.

For a set $S$ of patterns, let $L_S=\lim_{n\rightarrow\infty}f_n^{1/n}/n$ denote the forest Stanley-Wilf limit for $S$. By Theorem \ref{fswl}, $L_S$ exists for all uncovered sets $S$. We will show the existence of $L_S$ for a few other sets in this subsection. We will also drop braces in the subscript in $L_S$, so for example we will write $L_{123,231}$ instead of $L_{\{123,231\}}$.

The proof of Theorem \ref{fswl} given in Subsection \ref{swcpf} allows us to compute convergent lower bounds for $L_S$. Indeed, note that $1/(er_n)$ increases to $L_S$, where $r_n$ is the unique positive root of $D_n(x)=1$, as previously defined. The functions $A_n,B_n,C_n,D_n$ are determined by $t_{k+1}$ and $f_k$ for $k\le n$, so we are able to estimate $r_n$ by computing the sequences $t_k$ and $f_k$ up to $n+1$. Anders and Archer provide many explicit formulas for $f_n$ for certain sets in \cite{AA}, and Garg and Peng give many recursions for $f_n$ for some other sets in \cite{GP}. Using these, we are able to find lower bounds for $L_S$ for certain $S$ displayed in Figure \ref{computable}.

\begin{figure}[ht]
    \centering
    \begin{tabular}{|c|c|c|c|}
    \hline
    $S$ & $n$ & Proven & Conjectured \\
    \hline
    \makecell{$123$ \\ $132$} & $350$ & $\ge0.6766$ & $\approx0.6801$ \\
    \hline
    $213$ & $2500$ & $\ge0.65493$ & $\approx0.65521$ \\
    \hline
    \makecell{$123,213$ \\ $132,213$} & $1700$ & $\ge0.555617$ & $\approx0.555843$ \\
    \hline
    $123,231$ & $800$ & $\ge0.5402$ & $\approx0.5530$ \\
    \hline
    $132,231$ & $1000$ & $\ge0.58145$ & $\le0.58421$ \\
    \hline
    $213,231$ & $2500$ & $\ge0.557725$ & $\approx0.557864$ \\
    \hline
    $123,132,213$ & $1650$ & $\ge0.51781$ & $\le0.51939$ \\
    \hline
    $123,132,231$ & $2500$ & $\ge0.53057$ & $\le0.53169$ \\
    \hline
    $132,213,231$ & $2500$ & $\ge0.48241$ & $\le0.48317$ \\
    \hline
    $123,2413,3412$ & $1800$ & $\ge0.62765$ & $\le0.62939$ \\
    \hline
    \end{tabular}
    \caption{Computed lower bounds for $L_S$.}
    \label{computable}
\end{figure}

Here, $n$ denotes the amount of terms we computed, and the lower bound in the proven column corresponds to the one found with solving $D_n(x)=1$. The conjectured column contains five conjectured values of $L_S$ given by Garg and Peng in \cite[Conjecture 7.2]{GP} and five conjectured upper bounds based on our computations. In all of the cases we computed, the sequence $f_k^{1/k}/k$ was decreasing for $k\le n$, and our five conjectured bounds correspond to the value of $f_n^{1/n}/n$. We have also added in any nontrivial forest-Wilf equivalences in the $S$ column. We did not include results of complementation in this column, but clearly those sets also have the same forest Stanley-Wilf limit.

While our proven lower bounds on $L_S$ are relatively close to the conjectured approximate values and upper bounds, in order to compute $L_S$ to arbitrary precision, one would need a method to prove convergent upper bounds on $L_S$ as well. Unfortunately, we were not able to adapt our methods from the proof of Theorem \ref{fswl} to obtain upper bounds from the first few terms of $f_n$. A natural step would be to replace the inequality $t_{k+1}\ge f_k$ with the inequality $t_k\le kf_{k-1}$. This inequality follows from the observation that a tree on $[k]$ that avoids $S$ consists of a root vertex with label $a$ and a forest on $[k-1]\setminus\{a\}$ that avoids $S$. There are $k$ choices for $a$ and for each choice of $a$, there are at most $f_{k-1}$ forests on the remaining $k-1$ vertices that work, yielding the claimed bound of $kf_{k-1}$. Note that equality holds in the inequalities $t_{k+1}\ge f_k$ and $t_k\le kf_{k-1}$ when $S=\{21\}$ and $S=\varnothing$, respectively. The method in the proof of Theorem \ref{fswl} can be viewed as starting with forests avoiding $21$, i.e. increasing forests, and iteratively adding in more forests that avoid $S$ corresponding to using higher truncations of $A(x)=\sum_{k\ge0}(t_{k+1}-f_k)x^k/k!$. We can try to take a similar approach with the upper bound, starting with all forests and iteratively removing more forests that do not avoid $S$ corresponding to higher truncations of $P(x)=\sum_{k\ge1}(kf_{k-1}-t_k)x^k/k!$. Instead of a differential equation, we get the equation $xe^{T(x)}-T(x)=P(x)$ for $T(x)$, which we can attempt to approximate with $T_n(x)$ satisfying $xe^{T_n(x)}-T_n(x)=P_n(x)=\sum_{1\le k\le n}(kf_{k-1}-t_k)x^k/k!$. Note that when $P(x)=0$, we recover the equation $xe^{T(x)}=T(x)$, the functional equation for the Cayley tree function (see \cite[Section II.5.1]{FS}). We would like for the growth rate of the coefficients of the $T_n$ that solves $xe^{T_n}-T_n=P(x)$, or $T_n=xe^{T_n}-P_n(x)$, to be in the smooth implicit-function schema defined in \cite[Section VII.4.1]{FS}, in which case we can recover an upper bound for $L_S$. However, the presence of negative coefficients in the $-P_n(x)$ on the right-hand side makes this impossible. The example given at the end of \cite[Section VII.4.1]{FS} shows that such negative coefficients can lead to pathological situations. It would be interesting to somehow repair this method or find a different way to compute upper bounds on $L_S$.

While Theorem \ref{fswl} only shows the existence of $L_S$ for uncovered $S$, it is possible to show that $L_S$ exists in other cases as well. For example, we can classify all of the sets $S$ of patterns satisfying $L_S=0$.

\begin{prop}
\label{lim0}
The limit $\lim_{n\rightarrow\infty}f_n^{1/n}/n=0$ holds if and only if $S$ contains the patterns $1\cdots k$ and $\ell\cdots 1$ for some $k$ and $\ell$.
\end{prop}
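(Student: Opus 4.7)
My plan is to establish the two implications separately: for ($\Leftarrow$) I combine Erd\H{o}s--Szekeres with an elementary generating-function computation to cap the depth of $S$-avoiders, and for ($\Rightarrow$) I exhibit either all increasing or all decreasing forests as avoiders of $S$ to provide a lower bound.

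For ($\Leftarrow$), assume $S$ contains both $1\cdots k$ and $\ell\cdots 1$. By Erd\H{o}s--Szekeres, every sequence of more than $(k-1)(\ell-1)$ distinct integers contains either an increasing subsequence of length $k$ or a decreasing one of length $\ell$, so every root-to-leaf path in a forest avoiding $S$ has length at most $d:=(k-1)(\ell-1)$. Writing $f_n^{(d)}$ and $t_n^{(d)}$ for the numbers of forests and trees on $[n]$ of depth at most $d$, with EGFs $F^{(d)}$ and $T^{(d)}$, the decomposition of a tree into its root and its set of child-subtrees gives $T^{(d)}(x)=xF^{(d-1)}(x)$ and $F^{(d)}(x)=\exp(T^{(d)}(x))$, with base case $F^{(0)}(x)=1$. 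Induction on $d$ then shows each $F^{(d)}$ is entire, so by Cauchy--Hadamard $(f_n^{(d)}/n!)^{1/n}\to 0$. Since $f_n\le f_n^{(d)}$, Stirling's approximation yields
\[
\frac{f_n^{1/n}}{n}=\Bigl(\frac{f_n}{n!}\Bigr)^{1/n}\cdot\frac{(n!)^{1/n}}{n}\longrightarrow 0\cdot e^{-1}=0,
\]
and the full limit (not merely the liminf or limsup) exists and equals $0$.

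For ($\Rightarrow$), I argue the contrapositive. If $S$ contains no pattern of the form $1\cdots k$, then every increasing rooted labeled forest on $[n]$ avoids $S$. The EGF $T(x)$ of increasing trees satisfies $T'(x)=e^{T(x)}$ with $T(0)=0$ (removing the necessarily minimal root leaves an increasing forest on the remaining labels), which gives $T(x)=-\log(1-x)$ and hence $F(x)=e^{T(x)}=1/(1-x)$; thus $f_n\ge n!$, so $\liminf_n f_n^{1/n}/n\ge e^{-1}>0$. If instead $S$ contains no decreasing pattern $\ell\cdots 1$, the complementation map $F\mapsto\overline{F}$ from Section~\ref{def} puts the $n!$ decreasing forests on $[n]$ in bijection with the $n!$ increasing ones, and every decreasing forest automatically avoids $S$, so again $f_n\ge n!$. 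In either case $\lim f_n^{1/n}/n$ cannot be $0$.

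The step most worth flagging as an obstacle is that Theorem~\ref{fswl} does not supply the existence of $\lim f_n^{1/n}/n$ in the covered case, so in ($\Leftarrow$) we have to produce the limit itself rather than read it off from a prior existence result. This is handled essentially for free, since the entire-ness of $F^{(d)}$ forces the ordinary limit of $(f_n^{(d)}/n!)^{1/n}$, and hence of $(f_n/n!)^{1/n}$, to vanish; no delicate estimation of $f_n$ is required.
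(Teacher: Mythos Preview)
Your proof is correct and follows essentially the same approach as the paper: Erd\H{o}s--Szekeres to bound the depth, then the recursion $T^{(d)}=xF^{(d-1)}$, $F^{(d)}=e^{T^{(d)}}$ to show the bounded-depth EGF is entire for the forward direction, and the lower bound $f_n\ge n!$ via increasing (or decreasing) forests for the reverse. The only cosmetic differences are your sharper depth bound $(k-1)(\ell-1)$ versus the paper's $k\ell$, and your explicit derivation of the increasing-forest count from the ODE $T'=e^T$, where the paper simply quotes $f_n=n!$.
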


\begin{proof}
Note that forests avoiding $1\cdots k$ and $\ell\cdots1$ must have depth at most $k\ell$ by the Erd\H os-Szekeres Theorem. We will show that if $f_{m,n}$ and $t_{m,n}$ are respectively the number of forests and trees on $[n]$ of depth at most $m$, then $\lim_{n\rightarrow\infty}f_{m,n}^{1/n}/n=\lim_{n\rightarrow\infty}t_{m,n}^{1/n}/n=0$. Let $F_m(x)=\sum_{k=0}^\infty f_{m,k}x^k/k!$ and $T_m(x)=\sum_{k=0}^\infty t_{m,k}x^k/k!$ denote the exponential generating functions of the sequences $\{f_{m,n}\}$ and $\{t_{m,n}\}$. By standard manipulations of labeled combinatorial classes and exponential generating functions, $F_m=e^{T_m}$ and $T_{m+1}=xF_m$ for all $m$. As $T_1=x$, it follows by induction on $m$ that $F_m(z)$ and $T_m(z)$ are entire functions in $z\in\mathbb C$ for all $m$. Thus, $\lim_{n\rightarrow\infty}\left(f_{m,n}/n!\right)^{1/n}=\lim_{n\rightarrow\infty}\left(t_{m,n}/n!\right)^{1/n}=0$, so by Stirling's approximation, \[0\le\lim_{n\rightarrow\infty}f_n^{1/n}/n=e\lim_{n\rightarrow\infty}\left(f_n/n!\right)^{1/n}\le e\lim_{n\rightarrow\infty}\left(f_{m,n}/n!\right)^{1/n}=0.\]

On the other hand, note that if all increasing forests avoid $S$, then $f_n\ge n!$ so $\liminf_{n\rightarrow\infty}f_n^{1/n}/n\ge e^{-1}$. Thus, $S$ must contain a pattern of the form $1\cdots k$. The same holds for decreasing forests, so $S$ must also contain a pattern of the form $\ell\cdots1$, as desired.
\end{proof}

\begin{cor}
\label{limint}
When $L_S$ exists, it lies in $\{0\}\cup[e^{-1},1]$.
\end{cor}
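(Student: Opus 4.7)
The plan is to split on whether $L_S=0$ and handle the two endpoints separately. The upper bound $L_S \le 1$ is immediate from Cayley's formula, as the paper has already noted in the introduction: $f_n \le (n+1)^{n-1}$, so $f_n^{1/n}/n \le (n+1)^{(n-1)/n}/n$, which tends to $1$. Hence whenever $L_S$ exists we automatically have $L_S \le 1$.

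For the lower endpoint, the approach is to contrapose Proposition \ref{lim0}. Assume $L_S \ne 0$. Since we are assuming $L_S$ exists, this means $L_S>0$, so Proposition \ref{lim0} tells us the pattern condition there must fail: $S$ either contains no pattern of the form $1\cdots k$ or contains no pattern of the form $\ell\cdots 1$. In the former case every increasing forest on $[n]$ avoids $S$; in the latter case every decreasing forest on $[n]$ does. Either way $f_n \ge n!$, and Stirling's approximation then yields
\[
L_S \;=\; \lim_{n\rightarrow\infty}\frac{f_n^{1/n}}{n} \;\ge\; \lim_{n\rightarrow\infty}\frac{(n!)^{1/n}}{n} \;=\; e^{-1},
\]
so $L_S \in [e^{-1},1]$ in this case.

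There is no real obstacle: the corollary is a direct repackaging of Cayley's formula, Proposition \ref{lim0}, and the bound $f_n \ge n!$ coming from increasing or decreasing forests, all of which have already appeared in the preceding proofs. The only small subtlety worth flagging is that the argument uses the existence of $L_S$ only to convert the conclusion $\liminf_{n\rightarrow\infty} f_n^{1/n}/n \ge e^{-1}$ into a statement about the limit itself; without the existence hypothesis one would only obtain a bound on the $\liminf$, which is why the corollary is stated conditionally.
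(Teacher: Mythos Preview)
Your proof is correct and matches the paper's implicit argument: the upper bound comes from Cayley's formula as noted in the introduction, and the lower bound is precisely the second half of the proof of Proposition \ref{lim0}, where the paper shows that if $S$ fails to contain a monotone pattern of one of the two types then $f_n\ge n!$ and hence $\liminf_{n\to\infty} f_n^{1/n}/n\ge e^{-1}$. The paper states the corollary without a separate proof since these ingredients have just appeared, and your write-up simply makes the deduction explicit.
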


By examining our proof of Theorem \ref{fswl}, we can also determine when an uncovered set $S$ satisfies $L_S=e^{-1}$.

\begin{prop}
\label{lim1e}
If $S$ is an uncovered set of patterns, then $L_S=e^{-1}$ if and only if $S$ contains $12$ or $21$.
\end{prop}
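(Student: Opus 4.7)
The plan is to reduce via the complementation symmetry $L_S = L_{\bar S}$, which swaps $12$ and $21$ and preserves uncovered-ness, to the convention (already used throughout Section~\ref{asy}) that no pattern in $S$ begins with $1$. Under this convention $12 \notin S$, so the hypothesis ``$12 \in S$ or $21 \in S$'' collapses to ``$21 \in S$'', and the proposition reduces to showing that $L_S = e^{-1}$ iff $21 \in S$.

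The forward direction is essentially already handled in the paragraph preceding Proposition~\ref{tkfk}: if $21 \in S$, then every other pattern in $S$ (having length $\geq 2$ and not starting with $1$) has a descent and hence contains a $21$-instance, so $S$ is effectively $\{21\}$; thus $f_n = n!$ counts increasing forests, and $L_S = e^{-1}$ follows from Stirling.

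For the reverse direction I would work directly with the analytic machinery built for Theorem~\ref{fswl}. Recall $L_S = 1/(er)$, where $r = \sup\{t : D(t) \leq 1\}$, so the goal becomes showing $r < 1$ whenever $21 \notin S$. The key combinatorial input is that when $21 \notin S$ both labeled trees on two vertices avoid $S$, yielding $t_2 - f_1 = 2 - 1 = 1$; combined with Proposition~\ref{tkfk}, this forces the $x^1$-coefficient of $A$ to be $1$ while all other coefficients remain nonnegative, so $A(x) \geq x$ on $[0, R)$. Successively integrating and exponentiating then gives the elementary estimates $B(x) \geq x^2/2$, $C(x) \geq 1 + x^2/2$, and $D(x) \geq x + x^3/6$.

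The only step requiring real care is the possibility that the common radius of convergence $R$ is smaller than $1$, in which case $D$ is not even defined at $x = 1$. I would dispatch this by a case split: if $R \geq 1$, then letting $x \uparrow 1$ in the bound $D(x) \geq x + x^3/6$ produces some $x < 1$ with $D(x) > 1$, forcing $r < x < 1$; if $R < 1$, then $r \leq R < 1$ is immediate from Lemma~\ref{rntor}. In either scenario $r < 1$, so $L_S = 1/(er) > e^{-1}$, completing the proof.
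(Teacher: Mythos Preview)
Your proof is correct and follows essentially the same approach as the paper: both hinge on the combinatorial input $t_2 - f_1 = 1$ when neither $12$ nor $21$ lies in $S$, and both feed this into the analytic machinery from the proof of Theorem~\ref{fswl} to force the relevant radius below $1$. The only technical difference is that the paper works with the truncated quantities $A_1(x)=x$ and $D_1(x)=\int_0^x e^{t^2/2}\,dt$, which are entire and hence sidestep your case split on $R$; since $r_1 \ge r$ by Lemma~\ref{rntor}, showing $r_1 < 1$ via $D_1(1) > 1$ suffices and is marginally cleaner than bounding $D$ directly.
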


\begin{proof}
If $21\in S$, then no patterns in $S$ can start with $1$, so all other patterns contain $21$ and are superfluous. It then follows that $f_n=n!$ so $L_S=e^{-1}$. The same exact argument works for if $12\in S$. In the other direction, note that $1/(er_1)$ is a lower bound for $L_S$. If $S$ does not contain $12$ or $21$, then $t_2=2$ while $f_1=1$, so $A_1(x)=x$ and $D_1(x)=\int_0^xe^{t^2/2}dt$. It is then clear that $D_1(1)>1$ so $r_1<1$ and $L_S>1/e$, as desired.
\end{proof}

This proposition shows that a small change to the number of $S$-avoiding trees and forests for a small number of vertices already results in a strictly larger forest Stanley-Wilf limit. The asymptotics of $f_n$ seem to be quite sensitive to changes in $t_k$ and $f_k$ for small $k$, at least for uncovered sets $S$. This is in sharp contrast with the situation for permutations. For example, there is only one permutation of $[n]$ that avoids $21$, namely $1,\ldots,n$. We then consider permutations of $[n]$ that avoid $\{213,231,312,321\}$, the set of all patterns of length at least $3$ that do not start with $1$. For $n>1$ there are only two such permutations, given by $1\cdots n$ and $1\cdots(n-2)n(n-1)$. The discrepancy between the number of permutations of $[n]$ that avoid $21$ and $\{213,231,312,321\}$ for $n=2$ is not magnified for larger $n$. For forests, however, there are exponentially many more forests on $[n]$ avoiding $\{213,231,312,321\}$ than there are forests on $[n]$ avoiding $21$. This can intuitively be explained by the observation that there are generally many ways to perturb an increasing forest into another forest that still avoids $\{213,231,312,321\}$. Any vertex whose children are all leaves can swap labels with one of its children, and the resulting forest will still avoid $\{213,231,312,321\}$ (see Figure \ref{tettreefig} for an example). In contrast, when one tries to apply this to the increasing path, corresponding to the permutation $1,\ldots,n$, there is only one way to do so which results in the one other permutation avoiding $\{213,231,312,321\}$. Heuristically, discrepancies between $f_k$ for small $k$ manifest close to the leaves of the forest. There are generally relatively many vertices close to leaves, so the discrepancy is magnified into a strictly larger limit.

\begin{figure}[ht]
\centering
\forestset{filled circle/.style={
      circle,
      text width=4pt,
      fill,
    },}
\begin{forest}
for tree={filled circle, inner sep = 0pt, outer sep = 0 pt, s sep = 1 cm}
[, 
    [, fill=red, edge label={node[left]{1}}
        [, edge label={node[left]{2}}
        ]
        [, edge label={node[left]{3}}
        ]
        [, edge label={node[left]{4}}
        ]
    ]
    [, edge label={node[left]{5}}
        [, fill=red, edge label={node[left]{6}}
            [, edge label={node[left]{8}}
            ]
            [, edge label={node[left]{9}}
            ]
        ]
        [, fill=red, edge label={node[left]{7}}
            [, edge label={node[left]{10}}
            ]
        ]
    ]
    [, edge label={node[right]{11}}
        [, edge label={node[right]{12}}
            [, fill=red, edge label={node[left]{13}}
                [, edge label={node[left]{16}}
                ]
                [, edge label={node[left]{17}}
                ]
                [, edge label={node[left]{18}}
                ]
            ]
            [, edge label={node[left]{14}}
                [, fill=red, edge label={node[left]{19}}
                    [, edge label={node[left]{21}}
                    ]
                    [, edge label={node[right]{22}}
                    ]
                ]
            ]
            [, fill=red, edge label={node[right]{15}}
                [, edge label={node[right]{20}}
                ]
            ]
        ]
    ]
]
\end{forest}
    \caption{Any of the labels of the red vertices, which have labels $1,6,7,13,15,19$, of this increasing forest can be swapped with one of its children, and the resulting forest will still avoid the set $\{213,231,312,321\}$.}
    \label{tettreefig}
\end{figure}
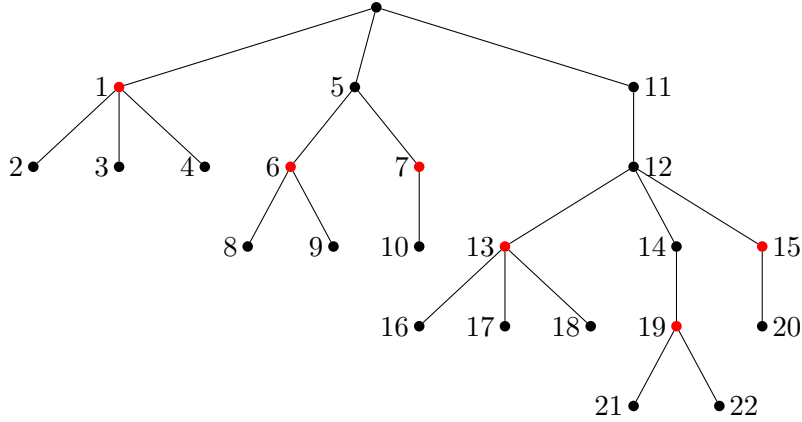

For $S=\{213,231,312,321\}$, we can give more explicit properties of $f_n$ and $t_n$.

\begin{prop}
\label{tplusf}
For $S=\{213,231,312,321\}$, the exponential generating function $T(x)$ of $t_n$ satisfies the differential equation $T'=T+e^T$ with initial condition $T(0)=0$.
\end{prop}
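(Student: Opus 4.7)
The plan is to prove the equivalent coefficient-level identity $t_{n+1} = t_n + f_n$ for every $n \geq 0$. Comparing coefficients of $x^n/n!$ shows this is precisely the statement $T'(x) = T(x) + e^{T(x)} = T(x) + F(x)$, and the initial condition $T(0)=0$ is immediate from the convention $t_0 = 0$ (trees are nonempty).

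The first step is a structural claim locating the vertex labeled $1$ in an $S$-avoiding tree: I would show that $1$ must be either the root or a leaf that is a child of the root. Since $S = \{213, 231, 312, 321\}$ consists of exactly those length-$3$ patterns whose smallest entry is not in the first position, any ancestor chain of length $3$ placing $1$ in position $2$ or $3$ is forbidden. Concretely, if $1$ sat at depth $d \geq 3$, then its depth-$(d-2)$ and depth-$(d-1)$ ancestors together with $1$ would form a chain with label pattern $231$ or $321$; if $1$ sat at depth $2$ and had a child $c$, then $(\text{root}, 1, c)$ would form a chain with pattern $213$ or $312$. All four forbidden patterns lie in $S$, so the claim follows.

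With this dichotomy, I would decompose $S$-avoiding trees on $[n+1]$ into two reversible cases. When $1$ is the root, deleting $1$ and relabeling $i \mapsto i-1$ on the remaining vertices yields an $S$-avoiding forest on $[n]$; the inverse attaches a new root labeled $1$ above the label-shifted forest, and the new tree is still $S$-avoiding because every freshly created ancestor chain through the root starts with the globally smallest label and hence its pattern begins with $1 \notin S$. When $1$ is a leaf-child of the root, deleting this leaf and relabeling produces an $S$-avoiding tree on $[n]$; the inverse hangs a leaf labeled $1$ off the root of the shifted tree, and no new length-$3$ chain through this leaf exists since its only ancestor is the root. The two constructions partition $S$-avoiding trees on $[n+1]$ into $f_n$ trees of the first type and $t_n$ trees of the second, giving $t_{n+1} = t_n + f_n$.

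The only real obstacle is the structural lemma on the position of $1$, but this is forced because $S$ was chosen to be exactly the complete list of length-$3$ patterns not beginning with $1$. Once it is in hand, verifying that the two reinsertion maps preserve $S$-avoidance is routine, relying only on the fact that no pattern in $S$ starts with its minimum element.
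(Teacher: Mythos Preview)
Your proposal is correct and follows essentially the same approach as the paper: both reduce to the coefficient identity $t_{n+1}=t_n+f_n$, establish that the vertex labeled $1$ must be either the root or a leaf-child of the root, and then split into the two corresponding bijective cases. Your write-up is slightly more explicit about why each forbidden configuration yields a pattern in $S$ and about why the inverse constructions preserve $S$-avoidance, but the argument is the same.
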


\begin{proof}
By definition, $T(0)=0$, so it suffices to show that $T(x)$ satisfies $T'=T+e^T$. In terms of the coefficients, this reduces to showing the identity $t_{k+1}=f_k+t_k$.

We prove that $t_{k+1}=f_k+t_k$ by casework, depending on where the label $1$ is in a tree on $[k+1]$ avoiding $S$. If $1$ is at the root, then the rest of the tree must be a forest on $\{2,\ldots,k+1\}$ that avoids $S$, and any such forest will work, resulting in $f_k$ such trees. If $1$ is not at the root, then it cannot have any children. We also cannot have the vertex labeled $1$ be at depth more than $2$. Thus, the vertex labeled $1$ must be a child of the root of the tree. Deleting this vertex results in a tree on $\{2,\ldots,k+1\}$, and any such tree can be turned into a tree on $[k+1]$ avoiding $S$ by adding a vertex labeled $1$ as a child of the root. The identity $t_{k+1}=t_k+f_k$ then follows, and the proposition is proven.
\end{proof}

This proposition tells us that the number of trees on $[n]$ avoiding $S$ is $\left((x+e^x)d/dx\right)^nx$ evaluated at $x=0$. By numerically approximating the singularity of the solution to this differential equation, we can obtain an approximation of $L_S$.

\begin{cor}
\label{tplusflim}
The approximation $L_{213,231,312,321}\approx0.4562$ holds.
\end{cor}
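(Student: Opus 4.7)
The plan is to locate the dominant singularity of $F=e^T$ using the ODE for $T$ provided by Proposition \ref{tplusf}, and then invoke the identity $L_S=1/(er)$ from the proof of Theorem \ref{fswl}, where $r$ is the radius of convergence of $F$.

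First, I would observe that the ODE $T'=T+e^T$ with $T(0)=0$ is separable: since $T(0)=0$ and $T+e^T\ge 1$ for $T\ge 0$, the solution $T(x)$ is strictly increasing on its domain, so I may write
\[x=\int_0^{T(x)}\frac{du}{u+e^u}.\]
Because $T'\ge e^T$, the solution blows up in finite time, namely at
\[r:=\int_0^{\infty}\frac{du}{u+e^u},\]
which is finite as the integrand is $O(e^{-u})$ as $u\to\infty$.

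Next, I would argue that $r$ is precisely the radius of convergence of $T$, and hence of $F=e^T$. The power series $T$ has nonnegative coefficients (it is the EGF of a counting sequence), so by Pringsheim's theorem its radius of convergence equals its smallest positive real singularity. On $[0,r)$ the ODE has a unique analytic solution coinciding with the Taylor series of $T$, and this solution is unbounded as $x\uparrow r$, so $r$ is indeed the radius of convergence. Combined with the identity $L_S=1/(er)$ from the proof of Theorem \ref{fswl}, this yields
\[L_S=\frac{1}{e\int_0^\infty du/(u+e^u)}.\]

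The remaining step is numerical evaluation. I would apply a standard quadrature rule (Simpson's rule, say) on $[0,N]$ for moderately large $N$, bounding the tail by $\int_N^\infty du/(u+e^u)\le\int_N^\infty e^{-u}\,du=e^{-N}$, and choose $N$ large enough for the desired precision. This gives $r\approx 0.80653$, whence $L_S\approx 0.4562$. The only subtlety is the rigor of the numerical estimate; since the integrand is positive and monotonically decreasing with an explicit exponential tail bound, one can in principle promote this to a rigorous enclosure to arbitrary precision using interval arithmetic, though for the stated $\approx$ form of the corollary a floating-point computation suffices.
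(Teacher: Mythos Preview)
Your argument is correct and follows the same approach that the paper gestures at: locate the real singularity of the ODE $T'=T+e^T$, identify it with the radius of convergence $r$ of $F$, and read off $L_S=1/(er)$. The paper itself gives no detailed proof beyond ``numerically approximating the singularity of the solution to this differential equation,'' so your write-up is actually more explicit than the original. In particular, separating variables to obtain the closed form
\[
r=\int_0^\infty\frac{du}{u+e^u}
\]
is a clean refinement: it turns the vague instruction to ``find the blow-up time numerically'' into evaluating a single convergent integral with an explicit exponential tail bound, which makes the error analysis straightforward. The only point you might tighten is the Pringsheim step---you want both inequalities: $\rho\le r$ because $T(x)\to\infty$ as $x\uparrow r$, and $\rho\ge r$ because otherwise Pringsheim places a singularity at $\rho<r$ on the positive real axis, where the analytic ODE existence theorem (the right-hand side being entire in $T$) would furnish a local analytic continuation, a contradiction. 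With that clarification the argument is complete.
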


This is indeed greater than $e^{-1}\approx0.3679$. Notably, we are able to give an approximation of $L_S$ here instead of just a lower bound because we have an explicit differential equation that $T(x)$ satisfies. Even if the differential equation is not explicitly solveable, we can numerically approximate $L_S$. It seems to be very rare that this is possible, and none of the other uncovered sets $S$ other than the ones containing $21$ seem to satisfy any simple differential equation.

With all of the limits computed so far, one might conjecture that having the same forest Stanley-Wilf limit implies forest-Wilf equivalence. While this may be the case for uncovered sets of patterns, it is not true in general.

\begin{prop}
\label{otherlim1e}
For $S=\{132,231,321\}$, $t_n=n!$, $T(x)=x/(1-x)$, $F(x)=e^{x/(1-x)}$, and $L_{132,231,321}=e^{-1}$.
\end{prop}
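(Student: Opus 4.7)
The plan is to first characterize $S$-avoiding trees combinatorially, from which $t_n=n!$ drops out by a direct count, and then combine this with a Cauchy-type bound on $F(x)=e^{x/(1-x)}$ to obtain $L_S=e^{-1}$.

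First I would show that a tree on $[n]$ avoids $S=\{132,231,321\}$ if and only if every non-root vertex has a strictly smaller label than each of its descendants. For the easy direction, under this condition every root-to-leaf path has the form $r,a_1,\ldots,a_d$ with $a_1<\cdots<a_d$, so any three-element subsequence of it realizes one of $123$, $213$, or $312$ (depending on where $\ell(r)$ falls among the $a_i$), none of which lies in $S$. For the converse, consider a root-to-leaf path $x_1,\ldots,x_d$: avoiding $\{132,231\}$ forbids the middle entry of any triple of positions from being the maximum of the triple, and avoiding $\{231,321\}$ forbids the last entry of any triple from being the minimum. A short case analysis then shows that for $j\geq 3$ the first condition at position $j$, combined with the second condition applied with $k=j$ (which caps the number of earlier positions exceeding $x_j$ at one), forces $x_j<x_l$ for all $l>j$; applying the $k=3$ case of the second condition rules out $x_2>x_3$, and altogether $x_2<x_3<\cdots<x_d$. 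Any non-root vertex $v$ and one of its descendants $w$ can be embedded into such a path with $v$ in position $\geq 2$ and $w$ strictly later, so $\ell(v)<\ell(w)$.

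Given this characterization, an $S$-avoiding tree on $[n]$ is specified by picking a root label $r\in[n]$ ($n$ choices) together with an increasing forest on $[n]\setminus\{r\}$. Since the EGF of increasing trees is $-\log(1-x)$, the EGF of increasing forests is $1/(1-x)$, which gives $(n-1)!$ increasing forests on $n-1$ labeled vertices. Hence $t_n=n\cdot(n-1)!=n!$, so $T(x)=\sum_{n\geq 1}x^n=x/(1-x)$ and $F(x)=e^{T(x)}=e^{x/(1-x)}$.

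To finish with $L_S=e^{-1}$, I would sandwich $f_n$. Every increasing forest on $[n]$ satisfies the structural condition of the characterization, so $f_n\geq n!$ and thus $f_n^{1/n}/n\geq (n!)^{1/n}/n\to e^{-1}$ by Stirling. For the matching upper bound, $F$ has nonnegative Taylor coefficients and radius of convergence $1$, so Cauchy's coefficient inequality yields $f_n/n!\leq F(r)/r^n$ for any $r\in(0,1)$. Setting $r=1-1/\sqrt n$ and using $r/(1-r)=\sqrt n-1$ together with $-n\log r=\sqrt n+\frac12+O(1/\sqrt n)$ gives $f_n/n!\leq e^{2\sqrt n+O(1)}$, whence $(f_n/n!)^{1/n}\to 1$ and therefore $f_n^{1/n}/n\to e^{-1}$. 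The main obstacle is the structural characterization in the first step; the enumeration and asymptotic bound that follow are routine.
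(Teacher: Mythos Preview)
Your proposal is correct and follows essentially the same approach as the paper: characterize $S$-avoiding trees as those whose non-root part is an increasing forest, count to get $t_n=n!$, read off $T(x)=x/(1-x)$ and $F(x)=e^{x/(1-x)}$, and sandwich $f_n$ between $n!$ and a subexponential multiple of $n!$. The paper's arguments are slightly more streamlined in two places---the characterization is obtained by observing that any $21$-instance $x_i>x_j$ with $i\ge 2$ combines with the root $x_1$ to form one of $132,231,321$, and the upper bound follows immediately from the radius of convergence of $F$ being $1$ rather than from an explicit Cauchy estimate---but these are differences in execution, not in strategy.
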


\begin{proof}
We first show the following characterization of trees on $[n]$ avoiding $S$. They are the trees that have an arbitrary root label but are otherwise increasing. Indeed, to avoid the patterns in $S$, we cannot have any instances of $21$ not including the root. But as long as no such instances exist, we avoid $S$. There are $n$ ways to select a label for the root and $(n-1)!$ ways to choose the increasing forest underneath the root, for a total of $n!$ ways, as desired.

Consequently, the exponential generating function of $t_n$ is $T(x)=x/(1-x)$. Thus, the exponential generating function of $f_n$ is $F(x)=e^{x/(1-x)}$. The radius of convergence of $T(x)$ and $F(x)$ is $1$, so $\limsup_{n\rightarrow\infty}f_n^{1/n}/n\le e^{-1}$ by Stirling's approximation. But $f_n\ge n!$ as all increasing forests avoid $S$, so $\liminf_{n\rightarrow\infty}f_n^{1/n}/n\ge e^{-1}$, and we obtain the result that $L_{132,231,321}=e^{-1}$.
\end{proof}

Note that Proposition \ref{lim1e} does not apply here because $S$ is not uncovered. We know that the inequality $t_{k+1}\ge f_k$ cannot hold for all $k$, or the same proof for Theorem \ref{fswl} and Proposition \ref{lim1e} would apply. Indeed, $t_9=362880$ while $f_8=394353$. It is not even the case that $f_n=O(t_{n+1})$ here. Vaclav Kotesovec gives the asymptotic growth $f_n\sim1/(\sqrt{2e})n^{n-1/4}e^{2\sqrt n-n}$ on the OEIS for $f_n$ \cite{OEIS}. While we were able to show that the limit exists in this case, the fact that $t_{n+1}=o(f_n)$ for this covered set $S$ suggests that we will not be able to modify our proof of Theorem \ref{fswl} to work in general.

\section{Future Work}
\label{fut}
We conclude this paper by discussing several conjectures, open questions, and potential directions for future research.

\subsection{Asymptotics and forest Stanley-Wilf limits}
\label{asyfut}
\hspace*{\fill} \\
Conjecture \ref{gp72} is still unproven for covered sets of patterns. In the case of uncovered sets $S$, there remains the problem of finding the value of $L_S$ to arbitrary precision, as it does not seem possible in general to find differential equations for the exponential generating functions.

\begin{quest}
\label{upperalg}
Is there an algorithm that computes convergent upper bounds on $L_S$ for uncovered sets $S$?
\end{quest}

Beyond this, we believe that $L_S$ should satisfy certain ``monotonicity'' properties.

\begin{conj}
\label{1isnull}
If $L_S=1$, then $S=\varnothing.$
\end{conj}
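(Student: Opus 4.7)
The plan reduces to showing $L_{\{\pi\}}<1$ for every single pattern $\pi$, since $f_n(S)\le f_n(\{\pi\})$ for any $\pi\in S$ and so $L_S\le L_{\{\pi\}}$ whenever the latter exists. Patterns of length at most $2$ are disposed of immediately, as $\pi\in\{12,21\}$ forces $f_n(\pi)=n!$ and so $L_\pi=e^{-1}<1$. For $|\pi|\ge 3$, complementation lets us assume $\pi(1)\ne 1$, so $\{\pi\}$ is uncovered, Theorem \ref{fswl} applies, and $L_\pi=1/(er_\pi)$; it therefore suffices to show $r_\pi>1/e$.

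The first step is a termwise comparison with the unconstrained case $S=\varnothing$. Via the bijection in the proof of Proposition \ref{tkfk}, $a_k:=t_{k+1}(\pi)-f_k(\pi)$ counts $\pi$-avoiding trees on $[k+1]$ whose root is not labeled $1$; this is at most the total number $k(k+1)^{k-1}$ of trees on $[k+1]$ with root distinct from $1$, which is exactly the corresponding coefficient of $A_\emptyset$. Hence $A_\pi\le A_\emptyset$ termwise, and the inequality propagates through integration and exponentiation to give $D_\pi\le D_\emptyset$ on their common domain. A change of variables $u=T_\emptyset(t)$ in the defining integrals, using the Cayley equation $T_\emptyset=xe^{T_\emptyset}$, yields the closed forms $B_\emptyset(x)=T_\emptyset(x)^2/2$ and
\[
D_\emptyset(x)=1-e^{-1/2}\exp\!\left(\tfrac{(T_\emptyset(x)-1)^2}{2}\right).
\]
Setting $x=1/e$ (where $T_\emptyset=1$) gives $D_\emptyset(1/e)=1-e^{-1/2}<1$, and hence $D_\pi(1/e)<1$.

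The main obstacle is upgrading $D_\pi(1/e)<1$ to the strict inequality $r_\pi>1/e$. In the framework of Lemma \ref{rntor}, $r_\pi$ is extracted from the power series $D_\pi$; if the radius of convergence $R_{A_\pi}$ of $A_\pi$ (which equals that of $D_\pi$) is exactly $1/e$, one still only concludes $r_\pi=1/e$ and $L_\pi=1$. Thus $L_\pi<1$ is equivalent to $R_{A_\pi}>1/e$, which in turn is equivalent to $t_n(\pi)=O((e-\varepsilon)^n\cdot n!)$ for some $\varepsilon>0$, an exponential improvement over the Cayley bound. This is a forest analog of the finiteness half of the Stanley--Wilf theorem and is \emph{not} delivered by the framework of Theorem \ref{fswl} alone.

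Two plausible routes forward. The first is to adapt the Marcus--Tardos proof of the F\"uredi--Hajnal conjecture to rooted labeled trees: encode forests through Pr\"ufer-type codes or 0/1 matrices, translate classical $\pi$-avoidance into a forbidden configuration in that encoding, and run an adaptive block-decomposition argument to produce the needed exponential bound on $t_n(\pi)$. The second is to work past the formal radius of convergence: the explicit formula for $D_\emptyset$ remains well-defined and $C^1$ at $x=1/e$, and one can attempt, via a monotone comparison argument that controls $D_\pi-D_\emptyset$ near $1/e$, to show that $D_\pi$ admits a similar extension with $D_\pi(t)\le 1$ on an interval of the form $[0,1/e+\delta]$. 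In either case, some genuinely new input beyond the termwise inequality $A_\pi\le A_\emptyset$ appears necessary, and the conjecture should be regarded as open pending such an input.
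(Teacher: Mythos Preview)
The statement you are attempting is Conjecture~\ref{1isnull} in the paper: it is listed in the ``Future Work'' section as an open problem, and the paper offers no proof. So there is no proof in the paper to compare your attempt against, and your own closing assessment---that ``the conjecture should be regarded as open pending such an input''---matches the paper's position.

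Your analysis up to that point is sound. The reduction to singletons is correct, the termwise bound $A_\pi\le A_\emptyset$ is correct, and your closed form $D_\emptyset(x)=1-e^{-1/2}\exp\!\bigl((T_\emptyset(x)-1)^2/2\bigr)$ with $D_\emptyset(1/e)=1-e^{-1/2}$ is a nice computation. You also correctly diagnose why this does not finish the argument: knowing $D_\pi(x)<1$ for $x<1/e$ only yields $r_\pi\ge 1/e$, and upgrading to $r_\pi>1/e$ is equivalent to showing that the radius of convergence of $F_\pi$ exceeds $1/e$, i.e.\ that $f_n(\pi)=O((e-\varepsilon)^n n!)$. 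Since $L_\pi<1$ is by definition the statement $\lim (f_n/n!)^{1/n}<e$, the approach via $D_\pi\le D_\emptyset$ is circular---it reduces the conjecture to itself. In other words, the comparison with the Cayley case establishes that \emph{if} $L_\pi<1$ then the mechanism is ``radius of convergence $>1/e$'' rather than ``$D_\pi$ hits $1$ before $1/e$,'' but it does not move the needle on whether $L_\pi<1$.

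The paper's own suggested line of attack is different from either of your two proposed routes: rather than adapting Marcus--Tardos or analytically extending $D_\pi$ past $1/e$, the paper suggests first answering Question~\ref{upperalg} (finding an algorithm for convergent \emph{upper} bounds on $L_S$) and then analyzing when such bounds are identically~$1$.
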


\begin{conj}
\label{monolim}
If $\pi$ and $\sigma$ are different patterns such that $\pi$ contains $\sigma$, then $L_\pi>L_\sigma$.
\end{conj}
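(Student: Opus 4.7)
The plan is to work within the analytic framework of Subsection \ref{swcpf}. The weak inequality $L_\pi\ge L_\sigma$ is immediate from $f_n(\pi)\ge f_n(\sigma)$, since every $\sigma$-avoiding forest is also $\pi$-avoiding. Both $\{\pi\}$ and $\{\sigma\}$ are uncovered as singletons, so Theorem \ref{fswl} and Lemma \ref{rntor} reduce the conjecture to showing the strict inequality $r^\pi<r^\sigma$ between the radii characterized by $D^\pi(r^\pi)=D^\sigma(r^\sigma)=1$, where the superscripts denote the pattern being avoided.

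The strategy is to promote the trivial containment to a strict coefficient-wise domination $A^\pi\succ A^\sigma$ at some power $x^{k_0}$. By the injection in the proof of Proposition \ref{tkfk}, the integer $t_{k+1}-f_k$ enumerates avoiding trees on $[k+1]$ whose root is not labeled $1$, and every such $\sigma$-avoiding tree is also $\pi$-avoiding, giving $A^\pi\succeq A^\sigma$ coefficient by coefficient. Strict domination at a single $k_0$ would propagate analytically: integration produces a positive coefficient of $x^{k_0+1}$ in $B^\pi-B^\sigma$; the identity $C^\pi-C^\sigma=e^{B^\sigma}\bigl(e^{B^\pi-B^\sigma}-1\bigr)$, in which both factors have nonnegative coefficients with the first having positive constant term and the second having a positive coefficient of $x^{k_0+1}$, yields a positive coefficient of $x^{k_0+1}$ in $C^\pi-C^\sigma$; and a final integration gives $D^\pi\succ D^\sigma$ at $x^{k_0+2}$. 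On the positive reals of the common convergence domain this forces $D^\pi(x)>D^\sigma(x)$, and a short case analysis mirroring the proof of Lemma \ref{rntor} (splitting on whether $r^\sigma$ lies strictly inside or on the boundary of the convergence disk of $D^\pi$) yields $r^\pi<r^\sigma$.

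The remaining combinatorial step is to exhibit, for some $k_0$, a single $\pi$-avoiding tree on $[k_0+1]$ with non-$1$ root that contains $\sigma$. For any $k_0\ge|\sigma|+1$, I would take the tree consisting of a single downward path $v=u_0,u_1,\ldots,u_{|\sigma|}$ together with $k_0-|\sigma|$ extra leaves attached to $v$; assign label $1$ to one of the extra leaves, and label the path so that $u_1,\ldots,u_{|\sigma|}$ realize $\sigma$ while the full path realizes some pattern $\tau\ne\pi$ of length $|\sigma|+1$ whose last $|\sigma|$ entries form $\sigma$. Such $\tau$ exists because there are exactly $|\sigma|+1$ patterns of length $|\sigma|+1$ with $\sigma$ as suffix pattern, corresponding to the possible ranks of the inserted first entry, and $\pi$ is at most one of them when $|\pi|=|\sigma|+1$, while if $|\pi|>|\sigma|+1$ the path is already too short to realize $\pi$ at all. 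Every chain of vertices in the tree is either a sub-chain of the main path, whose length is at most $|\sigma|+1$, or of the form $\{v,w\}$ for an extra leaf $w$, so no chain realizes $\pi$; meanwhile $u_1,\ldots,u_{|\sigma|}$ realizes $\sigma$. The main obstacle I anticipate is not this explicit construction but the careful boundary analysis connecting strict coefficient-wise domination of $D^\pi$ over $D^\sigma$ to the strict inequality $r^\pi<r^\sigma$ in the edge case where $r^\sigma$ coincides with the radius of convergence of $D^\pi$, where a more delicate argument exploiting the blow-up of $D^\pi$ at its boundary is required.
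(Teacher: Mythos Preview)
This statement is a \emph{conjecture} in the paper and is not proved there; the paper only handles $\sigma\in\{12,21\}$ via Proposition~\ref{lim1e} and explicitly flags the comparison of $t_{k+1}(\pi)-f_k(\pi)$ with $t_{k+1}(\sigma)-f_k(\sigma)$ as ``the main difficulty.'' Your observation that the injection of Proposition~\ref{tkfk} is actually a \emph{bijection} onto avoiding trees with root label $1$ (when the pattern does not begin with $1$), so that $t_{k+1}-f_k$ exactly counts avoiding trees with root $\ne1$, is correct and does resolve that comparison in the generic case---this genuinely goes beyond what the paper establishes. There is a small wrinkle when $\pi(1)=1$ and $\sigma(1)=|\sigma|$ (e.g.\ $\pi=1432$, $\sigma=321$), since complementation cannot then arrange for both patterns to avoid starting with $1$ simultaneously; but this is easily repaired by rewriting the difference as $(t_{k+1}(\pi)-t_{k+1}(\sigma))-(f_k(\pi)-f_k(\sigma))$ and applying the ``add an extremal-label root'' injection for $\pi$ alone to the class of forests that avoid $\pi$ and contain $\sigma$.

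The genuine gap is exactly the boundary case you flag at the end, and your proposed fix does not work. If $r^\pi=R^\pi$, then by definition $D^\pi(t)\le1$ for all $t<R^\pi$, and monotone convergence forces $D^\pi(R^\pi)\le1$; there is no ``blow-up of $D^\pi$ at its boundary'' to exploit. In the residual configuration $r^\pi=R^\pi=R^\sigma$ one then gets $D^\sigma(R^\sigma)\le D^\pi(R^\pi)-\delta<1$ for some $\delta>0$ coming from the strict coefficient domination, but since $D^\sigma$ diverges beyond its radius $R^\sigma$ this yields only $r^\sigma=\sup\{t:D^\sigma(t)\le1\}=R^\sigma=R^\pi=r^\pi$, with no mechanism to separate the two. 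Ruling out this configuration---for instance by proving $r<R$ for every nontrivial single pattern, as Conjecture~\ref{sharpasy} would suggest---appears to be the real remaining obstacle, and your plan does not address it. Note that $r=R$ does occur for $S=\varnothing$, so the concern is not vacuous.
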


One possible way to resolve Conjecture \ref{1isnull} is to find an algorithm that answers Question \ref{upperalg} and analyze when the upper bounds it gives are always $1$. Note that Proposition \ref{lim1e} shows Conjecture \ref{monolim} when $\sigma\in\{12,21\}$. The main difficulty in generalizing our proof seems to be obtaining a comparison between $t_{k+1}(\pi)-f_k(\pi)$ and $t_{k+1}(\sigma)-f_k(\sigma)$. All we currently know is that these are nonnegative and equal to $0$ for $12$ and $21$, which is only sufficient to prove the connjecture for $\sigma\in\{12,21\}$.

We also have the following conjecture about sharper asymptotics for $f_n$.

\begin{conj}
\label{sharpasy}
For an uncovered set $S$ of patterns, there exist constants $a_S$ and $b_S$ such that $\frac{f_n}{n!}\sim a_Sn^{b_S}(eL_s)^n$.
\end{conj}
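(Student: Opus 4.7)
The plan is to apply singularity analysis to the EGF $F(x) = C(x)/(1 - D(x))$ in the factored form established in the proof of Theorem~\ref{fswl}, where $C$ and $D$ have nonnegative power series expansions. Write $r = 1/(eL_S)$ for the radius of convergence of $F$ and $R$ for the common radius of convergence of $A, B, C, D$, so that $r \le R$ by Lemma~\ref{rntor}. I would first handle the case $r < R$, where the argument reduces to meromorphic asymptotics, and then address the residual case $r = R$ via more detailed analytic work.

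When $r < R$, the function $D$ is analytic on $\{|z| < R\}$ and $D'(x) = C(x) > 0$ on $(0, R)$, so $D(z) - 1$ has a simple zero at $z = r$. The triangle-inequality argument from Lemma~\ref{fnmero} (using the positivity of the non-constant coefficients of $D$) shows that $|D(z)| < 1$ for every $z$ with $|z| = r$ and $z \neq r$, so $F$ is meromorphic on some disk $|z| < r + \epsilon$ with a unique simple pole at $z = r$. Flajolet--Sedgewick Theorem IV.10 then yields $f_n/n! \sim a_S (eL_S)^n$ with $a_S = C(r)/(r\, D'(r))$, establishing Conjecture~\ref{sharpasy} with $b_S = 0$.

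In the residual case $r = R$, the singular behavior of $F$ at $r$ is inherited from the intrinsic singularity of $A$ (and hence of $B, C, D$) at $R$, which reflects the singularity of the tree EGF $T$ at its radius of convergence. To extract the claimed asymptotic, I would seek a Puiseux-type expansion $A(x) = A_0 + \sum_{j \ge 1} a_j (R - x)^{\alpha_j}$ near $x = R$; integrating, exponentiating, and integrating again propagates this to compatible expansions of $B$, $C$, and $D$, and the Flajolet--Sedgewick transfer theorems (Theorems VI.3--VI.4) then convert the resulting singular expansion of $F$ into the desired asymptotic, with $b_S$ read off from the leading exponent. In some cases one can bypass $C/(1-D)$ entirely and work directly with $T$: for the ODE case $S = \{213, 231, 312, 321\}$ of Proposition~\ref{tplusf}, separating variables in $T' = T + e^T$ gives $T(x) \sim -\log(r - x)$, whence $F = e^T$ has a simple pole at $r$ and $b_S = 0$.

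The main obstacle is establishing the required Puiseux expansion of $A$ at $R$ for arbitrary uncovered $S$: there is no general structural equation for $A$ comparable to the ODE in the example above, and the only a priori bound available is the crude Cayley estimate used in Lemma~\ref{rntor}. A reasonable intermediate goal, suggested by both the ODE example and the meromorphic case, is to prove the sharper statement that $b_S = 0$ holds for every uncovered $S$, which would reduce the full conjecture to the simple-pole argument already in hand. Establishing this would plausibly require showing that either $r < R$ always holds, or that when $r = R$ the function $T$ still has only a logarithmic singularity at $r$.
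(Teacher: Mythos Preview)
The statement is presented in the paper as an open \emph{conjecture}; the paper supplies no proof, only the heuristic that $F$ is well approximated by the meromorphic functions $F_m$. So there is no paper proof to compare against, and your proposal should be read as an attack on an open problem.

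Your handling of the case $r < R$ is correct and goes beyond anything the paper establishes. Once the trivial cases $12 \in S$ or $21 \in S$ are set aside, $D$ has positive coefficients at $x$ and $x^3$, hence is aperiodic, so $z = r$ is the unique solution of $D(z) = 1$ on $\lvert z\rvert \le r$; the simple-pole transfer then gives $f_n/n! \sim (1/r)(eL_S)^n$ (note $D' = C$, so the residue of $F$ at $r$ is always $-1$), i.e.\ $b_S = 0$ and $a_S = eL_S$ in this regime.

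Your final paragraph, however, contains a concrete error. The empty set $S = \varnothing$ is uncovered by Definition~\ref{coverdef}, lies in the case $r = R = e^{-1}$, and by Cayley's formula satisfies $f_n/n! \sim \tfrac{e}{\sqrt{2\pi}}\, n^{-3/2} e^n$, so $b_\varnothing = -3/2$. The paper points this out immediately after stating the conjecture. Thus your proposed intermediate goal that $b_S = 0$ for \emph{every} uncovered $S$ is false, and so is the suggestion that $T$ must have only a logarithmic singularity at $r$ when $r = R$: for $S = \varnothing$, $T$ is the Cayley tree function, which has a square-root branch point at $e^{-1}$. The correct refinement, also noted in the paper, is that $b_S = 0$ appears to hold for \emph{nonempty} uncovered $S$; whether this is because $r < R$ always holds in that regime---which would reduce the conjecture to your simple-pole argument together with the explicit Cayley computation---is itself open and is not settled by your proposal.
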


Based on limited data, it seems that $b_S=0$ for nonempty $S$, while by Cayley's formula for $S=\varnothing$, $\frac{f_n}{n!}\sim\frac e{\sqrt{2\pi}}n^{-3/2}e^n$. The case that $S=\varnothing$ seems to be fundamentally different. The asymptotics for covered sets also seem to be very different. For example, for $S=\{1\cdots k,\ell\cdots1\}$, $L_S=0$, but clearly $\frac{f_n}{n!}\not\sim0$. Taking $k=3$ and $\ell=2$, forests avoiding $S$ become increasing forests of depth at most $2$. Such forests are in bijection with partitions of the label set $[n]$, so $f_n$ is given by the $n$th Bell number $B_n$. The asymptotics of $B_n$ are much more complicated than the behavior predicted by Conjecture \ref{sharpasy} for uncovered sets. Yet another example is given by $S=\{132,231,321\}$ from Proposition \ref{otherlim1e}, where $\frac{f_n}{n!}\sim\frac 1{\sqrt{4\pi e}}n^{-3/2}e^{2\sqrt n}$.

Our heuristic for Conjecture \ref{sharpasy} is that for uncovered sets, $F$ is reasonably approximated by series $F_m$ that have a meromorphic continuation to $\mathbb C$. The coefficients of these series all satisfy the type of asymptotic behavior described in the statement of the conjecture, so we believe that $F$ satisfies a similar estimate. This extends to any sets $S$ satisfying $f_n=O(t_{n+1})$ as well.

We in fact predict that the condition $f_n=O(t_{n+1})$ is what distinguishes uncovered sets and covered sets.

\begin{conj}
\label{fot}
A set $S$ of patterns is uncovered if and only if it satisfies $f_n=O(t_{n+1})$.
\end{conj}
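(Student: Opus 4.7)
The forward direction is immediate from Proposition \ref{tkfk}: if $S$ is uncovered, then by complementing if necessary we may assume no pattern in $S$ begins with $1$, so $t_{n+1}\ge f_n$ for every $n$ and hence $f_n=O(t_{n+1})$ with implicit constant $1$.

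For the reverse direction, suppose $S$ is covered, so that it contains a pattern $\pi$ with $\pi(1)=1$ of length $k$ and a pattern $\sigma$ with $\sigma(1)=\max\sigma$ of length $\ell$. I would aim for the stronger statement $t_{n+1}=o(f_n)$, proved by bounding $t_{n+1}$ structurally. The natural decomposition is by the root label: any $S$-avoiding tree on $[n+1]$ with root labeled $a$ becomes, after deletion of the root and relabeling, an $S$-avoiding forest on $[n]$ subject to the extra constraint that no downward path starting from a top-level root, when prepended by $a$, forms an instance of $\pi$ or of $\sigma$. If $g_n^{(a)}$ counts the $S$-avoiding forests on $[n]$ compatible with root label $a$ in this sense, then $t_{n+1}\le\sum_a g_n^{(a)}\le(n+1)\max_a g_n^{(a)}$, so it suffices to show that $\max_a g_n^{(a)}=o(f_n/n)$.

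The main obstacle is establishing this uniform bound. Heuristically, a typical large $S$-avoiding forest has many subtrees hanging below its top-level roots, and each one is an independent opportunity to contain $\pi(2)\cdots\pi(k)$ (forbidden when $a$ is smaller than all of its descendants) or $\sigma(2)\cdots\sigma(\ell)$ (forbidden when $a$ is larger than them); the joint probability that every subtree is $a$-compatible should tend to $0$, but formalizing this requires information about the distribution of subtree profiles in random $S$-avoiding forests, which is not well understood. An alternative route is via the convolutional identity $f_{n+1}=\sum_{k=0}^n\binom{n}{k}t_{k+1}f_{n-k}$ coming from $F'=T'F$: here $t_{n+1}$ is just one of $n+1$ summands, and a proof of the covered case would amount to showing that the interior terms collectively dominate whenever $S$ is covered.

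The worked example $S=\{132,231,321\}$ in Proposition \ref{otherlim1e}, where $T(x)=x/(1-x)$ and $F(x)=e^{T(x)}$ share the radius of convergence $1$ but $t_{n+1}/f_n\to 0$ purely through subexponential factors, strongly suggests that the gap between $t_{n+1}$ and $f_n$ for covered $S$ is always subexponential. This is what makes the hard direction delicate: it cannot be proved by exponential-order growth comparisons and appears to require either a saddle-point or singularity-analysis treatment of $F$ and $T$ sensitive to polynomial and $e^{\sqrt n}$-type corrections, or the structural probabilistic argument described above. I expect this subexponential aspect to be the principal difficulty in any approach, and it is consistent with the author's remark that covered sets are fundamentally different from uncovered ones.
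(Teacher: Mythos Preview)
The statement you are attempting to prove is labeled as a \emph{conjecture} in the paper, and the paper does not supply a proof. Your treatment of the forward direction is correct and matches the paper exactly: Proposition~\ref{tkfk} gives $t_{n+1}\ge f_n$ whenever $S$ is uncovered (after complementation if necessary), which is $f_n=O(t_{n+1})$ with implicit constant $1$. The paper phrases this equivalently by saying that for uncovered $S$ the root label $1$ or $n+1$ is always a valid extension, so the expected number of valid root labels is at least $1$.

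For the reverse direction, your proposal is not a proof but a sketch of approaches together with an honest account of where they stall, and this is consistent with the paper, which likewise offers only heuristics. The paper interprets $t_{n+1}/f_n$ as the expected number of root labels $a\in[n+1]$ that can be placed atop a uniform random $S$-avoiding forest on $[n]$ to produce an $S$-avoiding tree, observes that small $a$ are obstructed by the pattern starting with $1$ and large $a$ by the pattern starting with its maximum, and then explicitly says that while moderately sized roots might in principle keep this expectation bounded away from zero, the authors \emph{conjecture} this does not happen. Your root-label decomposition $t_{n+1}\le\sum_a g_n^{(a)}$ is exactly this viewpoint, and your remark that the $\{132,231,321\}$ example forces any argument to detect subexponential gaps is the same point the paper makes via Proposition~\ref{otherlim1e} and the surrounding discussion.

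In short, there is no error to flag, but there is also no proof in the paper to compare against: the reverse implication is open, and your proposal correctly identifies both that fact and the character of the obstruction.
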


Given a forest on $[n]$, there are $n+1$ ways we can extend this to a tree on $[n+1]$. We choose a root label $a$ for the tree in $[n+1]$ and the rest of the tree is the given forest, relabeled with $[n+1]\setminus\{a\}$. The quantity $\frac{t_{n+1}}{f_n}$ can be interpreted as the expected number of root labels we can choose for a uniform random forest on $[n]$ avoiding $S$ such that the resulting tree on $[n+1]$ also avoids $S$. For uncovered $S$, $1$ or $n+1$ is always a valid choice, so this expected value is always at least $1$. We predict that this expected value tends to $0$ for covered sets $S$. Small roots are unlikely to be possible because of the pattern in $S$ starting with $1$, and large roots are unlikely to be possible because of the pattern in $S$ starting with its largest element. While it may be possible that moderately sized roots can keep the expected value high, we conjecture that this is not the case.

One way to find a lower bound for $L_S$ for a covered set $S=\{\pi_1,\ldots,\pi_m\}$ is to consider the limit $L_S'$ for $S'=\{\pi_1',\ldots,\pi_m'\}$, where $\pi_i'$ is a subpattern of $\pi_i$ and $S'$ is an uncovered set. We conjecture that this is also how $L_S$ is achieved, i.e. that there cannot be exponentially more ways to avoid $S$ than there are to avoid $S'$ for the best choice of $S'$.

\begin{conj}
\label{dream}
Define the \emph{reduction} $\widehat\pi$ of a pattern $\pi=\pi(1)\cdots\pi(k)$ to be the pattern of length $k-1$ in the same relative order as $\pi(2)\cdots\pi(k)$. Let $S=\{\pi_1,\ldots,\pi_m\}$ be a covered set of patterns, and let $S_i=S\setminus\{\pi_i\}\cup\{\widehat\pi_i\}$. Then $L_S=\max_{1\le i\le m}L_{S_i}$.
\end{conj}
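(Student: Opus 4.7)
My plan splits the conjecture into its two inequalities. The easy direction $\max_i L_{S_i}\le L_S$ follows from the structural observation that $\widehat\pi_i$-avoidance is strictly stronger than $\pi_i$-avoidance: given any instance $v_1,\ldots,v_k$ of $\pi_i$ in a forest, the truncated sequence $v_2,\ldots,v_k$ witnesses an instance of $\widehat\pi_i$. Hence every $S_i$-avoiding forest is automatically $S$-avoiding, so $f_n(S_i)\le f_n(S)$ for all $n$, which yields $L_{S_i}\le L_S$ whenever both limits exist, and otherwise one argues with $\liminf$ and $\limsup$.

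For the harder inequality $L_S\le\max_i L_{S_i}$, the goal is a polynomial comparison $f_n(S)\le p(n)\cdot\max_i f_n(S_i)$, which produces the limit bound after taking $n$th roots. Since $S$ is covered, fix $\pi_{i_1},\pi_{i_2}\in S$ with $\pi_{i_1}(1)=1$ and $\pi_{i_2}(1)=|\pi_{i_2}|$. The first step is root-deletion: in any tree $T$ on $[n]$ avoiding $S$ with root labeled $a$, the forest $F=T\setminus\{\text{root}\}$ on $[n]\setminus\{a\}$ still avoids $S$, and the extra constraint from the root depends on $a$. If $a=1$ then $F$ must avoid $\widehat\pi_{i_1}$, since otherwise prepending the root yields a forbidden $\pi_{i_1}$-instance; consequently the trees with root $1$ are in bijection with forests on $[n-1]$ avoiding $S\cup\{\widehat\pi_i:\pi_i(1)=1\}$, which is at least as restrictive as $S_{i_1}$, so their number is at most $f_{n-1}(S_{i_1})$. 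Symmetrically, trees with root $n$ are counted by at most $f_{n-1}(S_{i_2})$, and together the two extremal cases contribute at most $2\max_i f_{n-1}(S_i)$.

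The principal obstacle is the middle-root contribution from $1<a<n$. For such $a$, deleting the root only forbids $\widehat\pi_{i_1}$-instances whose labels lie entirely in $(a,n]$ and $\widehat\pi_{i_2}$-instances whose labels lie entirely in $[1,a)$; the resulting avoidance is \emph{local} rather than global, and one cannot directly bound the count by $f_{n-1}(S_i)$ for any single $i$. This is exactly the difficulty flagged in Remark \ref{crem}, where the analogue of Proposition \ref{tkfk} breaks down. To address it I would attempt to generalize the truncated-approximation machinery of Theorem \ref{fswl}: build for each $i$ a family of meromorphic series $F_n^{(i)}$ whose polar structure is governed by $r_{S_i}$, and show that $F$ is coefficientwise dominated by a polynomial combination of the $F_n^{(i)}$. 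A parallel combinatorial strategy would locate in each $S$-avoiding tree a canonical vertex that is extremal within its subtree, split at that vertex, and recurse, hoping that the recursion contributes only polynomial overhead per level. Either route reduces to finding a replacement for Proposition \ref{tkfk} that controls covered sets, which the paper explicitly identifies as the core remaining challenge.
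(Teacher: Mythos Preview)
The statement you are trying to prove is labeled \texttt{conj} in the paper, not \texttt{thm}: it is Conjecture~\ref{dream}, an open problem, and the paper offers no proof. Indeed, the paper explicitly writes that ``it is possible that Conjecture~\ref{dream} is false even in simple cases such as $S=\{132,312\}$.'' So there is no proof in the paper to compare against.

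Your proposal does not prove the conjecture either, and you essentially say so yourself. The easy inequality $\max_i L_{S_i}\le L_S$ is correct and is exactly what the paper means when it says that reducing a pattern ``yields a lower bound on $L_S$.'' For the reverse inequality, your write-up is a plan rather than an argument: you split trees by root label, handle the extremal roots $a=1$ and $a=n$ cleanly, and then for $1<a<n$ you observe that the induced avoidance is only local and cannot be directly compared to any $f_{n-1}(S_i)$. At that point you propose two speculative attacks (a meromorphic-domination argument and a recursive canonical-vertex splitting) but carry out neither, and you close by noting that both reduce to finding a covered-set analogue of Proposition~\ref{tkfk}, which the paper flags as unresolved. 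That is an accurate diagnosis of where the difficulty lies, but it is not a proof; the middle-root contribution is the entire content of the conjecture, and nothing in your outline controls it. One further caution: the sets $S_i$ can themselves be covered, so even the existence of $L_{S_i}$ is not guaranteed by Theorem~\ref{fswl}, and the conjecture is implicitly recursive in the way the paper's surrounding discussion indicates.
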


We can repeatedly replace patterns in $S$ with their reductions until $S$ is an uncovered set, and this yields a lower bound on $L_S$. The conjecture is that $L_S$ is equal to the maximum lower bound achieved in this way. For example, this conjecture predicts that $L_{132,4213}=L_{132,213}$. This suggests a path to proving Conjecture \ref{gp72}. By reducing a pattern in a covered set $S$ of patterns we introduce more forests that contain $S$, and by reducing patterns in $S$ until it is uncovered, we obtain a natural lower bound on the limiting growth rate $L_S$. If we can show that reducing the correct pattern decreases the number of forests that avoid $S$ by a subexponential factor, then the existence of the limit would be shown to be equal to the limit for the uncovered set at the end of the reduction process. Note also that if this conjecture were true, it would provide an answer to the following question.

\begin{quest}
\label{limvals}
What are the possible values of $L_S$?
\end{quest}

The answer would then be the values of $L_S$ over all uncovered sets $S$, which we are able to estimate.

It is possible that Conjecture \ref{dream} is false even in simple cases such as $S=\{132,312\}$. However, it is consistent with Propositions \ref{lim0} and \ref{otherlim1e}. In those cases, the values of $f_n$ exceed the corresponding natural lower bounds by a subexponential factor, on the order of the Bell numbers or $\exp(O(\sqrt n))$. These can be interpreted as a result of more wildly behaved singularities of the exponential generating function $F(x)$ in the neighborhood of $\frac1{eL_S}$. Indeed, for a covered set of patterns, we no longer have the same approximation by meromorphic functions as in the uncovered case, which heuristically suggests more erratic behavior at the singularity.

We make one last generalization of forest Stanley-Wilf limits. Say that a rooted labeled forest $F_1$ \emph{contains} another rooted labeled forest $F_2$ if there exists a graph minor of $F_1$ that is isomorphic to $F_2$ and whose corresponding labels are in the same relative order as $F_2$. For example, the type of pattern avoidance we have been studying in this paper can be viewed as forests avoiding a rooted labeled path. Similar to the closed permutation classes considered in \cite{KK}, we can define a \emph{closed forest class} $\Pi$ to be a collection of rooted labeled forests such that if a forest $F_1\in\Pi$ contains a forest $F_2$, then $F_2\in\Pi$. We can make the following general conjecture about the growth rates of closed forest classes.

\begin{figure}[ht]
    \centering
    \forestset{filled circle/.style={
      circle,
      text width=4pt,
      fill,
    },}
    \begin{minipage}[b]{0.45\linewidth}
    \centering
    \begin{forest}
    for tree={filled circle, inner sep = 0pt, outer sep = 0 pt, s sep = 1 cm}
    [, 
        [, edge label={node[right]{7}}, name=v7
            [, edge label={node[left]{5}}, name=v5
                [, edge label={node[left]{4}}, name=v4
                    [, edge label={node[below]{2}}, name=v2
                    ]
                    [, edge label={node[below]{8}}, name=v8
                    ]
                ]
            ]
            [, edge label={node[right]{6}}, name=v6
                [, edge label={node[right]{3}}, name=v3
                    [, edge label={node[below]{1}}, name=v1
                    ]
                    [, edge label={node[below]{9}}, name=v9
                    ]
                ]
            ]
        ]
    ]
    \path[blue,out=-180,in=135,dashed] (v7.child anchor) edge (v2.parent anchor);
    \path[green,dashed] (v7.child anchor) edge (v8.parent anchor);
    \path[red,dashed] (v7.child anchor) edge (v3.parent anchor);
    \path[cyan,out=-90,in=135,dashed] (v3.child anchor) edge (v9.parent anchor);
    \end{forest}
    \end{minipage}
    \begin{minipage}[b]{0.45\linewidth}
    \centering
    \begin{forest}
    for tree={filled circle, inner sep = 0pt, outer sep = 0 pt, s sep = 1 cm}
    [, fill=white
        [, edge=white, edge label={node[above,black]{3}}
            [, edge=blue, edge label={node[left,black]{1}}
            ]
            [, edge=red, edge label={node[left,black]{2}}
                [, edge=cyan, edge label={node[below,black]{5}}
                ]
            ]
            [, edge=green, edge label={node[right,black]{4}}
            ]
        ]
    ]
    \end{forest}
    \end{minipage}
    \caption{The forest on the left contains the forest pattern on the right. Note that our forests are unordered, so the branches of the pattern can appear in a different order in the forest.}
    \label{forestpatfig}
\end{figure}
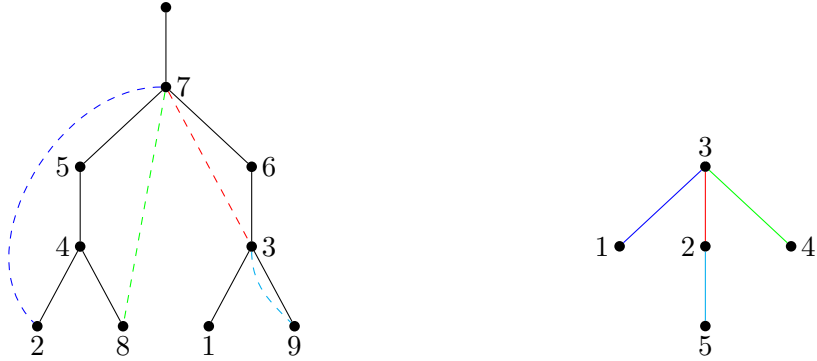

\begin{conj}
\label{superfswl}
Let $\Pi$ be a closed forest class, and let $\Pi_n$ denote the set of forests on $[n]$ in $\Pi$. Then $\lim_{n\rightarrow\infty}\frac{|\Pi_n|^{1/n}}n$ exists.
\end{conj}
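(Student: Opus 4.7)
The plan is to extend the analytic framework of Theorem \ref{fswl} to closed forest classes, adapting around two structural features of pattern avoidance that no longer hold automatically. First, the inequality $t_{k+1} \geq f_k$ of Proposition \ref{tkfk} may fail: Proposition \ref{otherlim1e} already exhibits $t_{n+1} = o(f_n)$ for a covered pattern set, and nothing in the definition of a closed forest class precludes similar behavior. Second, $\Pi$ need not be closed under disjoint unions of trees, so the identity $F(x) = e^{T(x)}$ can fail; this happens, for instance, when $\Pi$ forbids the two-isolated-vertex forest, forcing $\Pi$ to consist only of the empty forest and single-component trees. My first step is therefore to introduce the auxiliary class $\tilde\Pi$ of all forests whose components lie in $\Pi^T := \Pi \cap \{\text{trees}\}$. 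Since components of a minor of $F$ are minors of components of $F$, the class $\tilde\Pi$ is itself a closed forest class satisfying $\tilde F = e^T$ with $\Pi \subseteq \tilde\Pi$, yielding the pointwise bound $|\Pi_n| \leq E(t_0, \ldots, t_n)$ and a useful dominating series.

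In the regime $f_n = O(t_{n+1})$, the proof of Theorem \ref{fswl} applies with only minor modifications, as foreshadowed in Remark \ref{crem}: the approximating series $F_n$ remain meromorphic, their poles $r_n$ decrease to the singularity $r$ of $F$, and the limit $\lim |\Pi_n|^{1/n}/n = 1/(er)$ follows by the same radius-of-convergence argument applied to $F$ and $\tilde F$. The combinatorial input $t_{k+1} \geq c f_k$ must be verified from the closure properties of $\Pi$ alone, which I would attempt by showing that for a positive proportion of ``relative labels'' $a$, attaching a new root of relative rank $a$ to any forest $F \in \Pi_k$ yields a tree in $\Pi^T_{k+1}$. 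This succeeds unless $\Pi$ forbids some root-extension minor for every $a$, a highly restrictive condition that itself forces $\Pi$ into the second regime and so can be handled separately.

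The main obstacle is the complementary regime $t_{n+1} = o(f_n)$, where the stretched-exponential correction $e^{2\sqrt n}$ of Proposition \ref{otherlim1e} rules out meromorphic approximation near the dominant singularity and signals genuinely exotic analytic behavior of $F(x)$. My plan here is structural rather than analytic: I would try to show that such classes $\Pi$ force a near-bounded-depth structure in the spirit of Proposition \ref{lim0}, reducing the asymptotic counting to a layer-by-layer analysis where each layer is a constrained labeled set partition amenable to saddle-point methods. This is the most speculative part of the proposal, since even characterizing when $t_{n+1} = o(f_n)$ is itself a structural question about $\Pi$ for which no general tool currently exists. A clean existence proof in full generality may ultimately require a Marcus--Tardos-type theorem for labeled rooted forests, and resolving an analog of Conjecture \ref{dream} for arbitrary closed forest classes — allowing one to reduce a class with forbidden minor $M$ to one with a proper minor of $M$ — would be a natural intermediate target.
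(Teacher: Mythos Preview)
The statement you are attempting to prove is Conjecture~\ref{superfswl}, which the paper states explicitly as an \emph{open conjecture} and does not prove. There is therefore no proof in the paper to compare against; the paper merely remarks that the analytic framework of Theorem~\ref{fswl} automatically extends to certain sets of forest patterns (those for which the analogue of Proposition~\ref{tkfk} holds), and leaves the general case open.

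Your proposal is accordingly not a proof but a research outline, and you are candid about this. The genuine gaps are exactly where you flag them. First, the claim that $t_{k+1}\ge cf_k$ can be verified ``from the closure properties of $\Pi$ alone'' is unsupported: you sketch an argument about attaching a root of some relative rank, but closure under minors gives no control over \emph{extensions}, and the paper's own Conjecture~\ref{fot} already records that characterizing when $f_n=O(t_{n+1})$ is open even for pattern-avoidance classes. Second, your treatment of the regime $t_{n+1}=o(f_n)$ is, as you say, speculative: the suggestion that such classes force ``near-bounded-depth structure'' is not established, and Proposition~\ref{otherlim1e} already gives a class with $t_{n+1}=o(f_n)$ and unbounded depth (trees avoiding $\{132,231,321\}$ can be arbitrarily deep paths rooted anywhere), so the heuristic does not hold as stated.

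In short, your decomposition into the two regimes and the introduction of $\tilde\Pi$ are reasonable first moves, but neither branch is close to a proof, and the paper does not claim one either.
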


Note that forests avoiding a set $S$ of patterns form a closed forest class, so this can be seen as a generalization of Conjecture \ref{gp72}. It would also be interesting to study forests that avoid a set $S$ of \emph{forest patterns} consisting of rooted labeled forests, where the avoidance and containment is in the sense described above for forests. Figure \ref{forestpatfig} gives an example of containment of a forest pattern. This is somewhat reminiscent of the poset pattern avoidance studied by Hopkins and Weiler in \cite{HW}. However, restricting to the setting of forest patterns allows us to carry over results we have shown in this paper. For example, the proof for Theorem \ref{fswl} automatically gives the existence of the forest Stanley-Wilf limit for certain sets of forest patterns.

\subsection{Limiting distributions for forest statistics}
\label{statfut}
\hspace*{\fill} \\
Finally, we make some conjectures about how certain forest statistics are distributed in the limit $n\rightarrow\infty$.

Certain results in permutation pattern avoidance look at how permutation statistics such as the number of inversions and ascents are distributed when we look at permutations avoiding certain patterns rather than the whole symmetric group (see, for example, \cite{E04}). Forests come with their own set of interesting statistics that seem to yield interesting limit distributions. We make a few conjectures about the root of a tree on $[n]$ avoiding $S$ and the number of trees in a forest on $[n]$ avoiding $S$.

For a set $S$ of patterns, let $R_{S,n}$ denote the label of the root of a uniform random tree on $[n]$ avoiding $S$, let $T_{S,n}$ denote the number of trees in a uniform random forest on $[n]$ avoiding $S$, and let $T_{S,n,k}$ denote the number of trees with $k$ vertices in a uniform random forest on $[n]$ avoiding $S$.

\begin{conj}
\label{convdist}
For any set $S$ of patterns, there exists a random variable $R_S$ such that $\frac{R_{S,n}}n$ converges in law to $R_S$ as $n\rightarrow\infty$.
\end{conj}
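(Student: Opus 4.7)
The plan is to reduce the problem to the asymptotics of a bivariate counting sequence. Given a tree $T$ on $[n]$ avoiding $S$ with root label $a$, the forest of subtrees hanging from the root lives on $[n]\setminus\{a\}$, whose labels split into $a-1$ values below $a$ and $n-a$ values above $a$. Tracking this partition gives a bijection between such trees and rooted labeled forests on $(a-1)+(n-a)$ vertices bearing two colors of labels, subject to the constraint that prepending a ``middle-value'' vertex to any downward path starting at a root of the forest yields a sequence avoiding every $\pi\in S$. Writing $g_{i,j}$ for the number of such bi-colored forests, we have $t_{n,a}=g_{a-1,n-a}$ and $t_n=\sum_{a=1}^n g_{a-1,n-a}$, so the distribution of $R_{S,n}/n$ is completely determined by the shape of the sequence $(g_{a-1,n-a})_{a=1}^n$.

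The central analytic object is then the bivariate exponential generating function $G(x,y)=\sum_{i,j\ge 0}g_{i,j}\tfrac{x^iy^j}{i!\,j!}$, which factors as $G=e^H$ where $H$ is the EGF of the analogous bi-colored trees, and which is tied to the univariate tree EGF by the identity $T(z)=\int_0^z G(t,z-t)\,dt$. First I would derive a bivariate differential equation for $H$ by decomposing a bi-colored tree according to the color and relative rank of its root, in the spirit of Proposition \ref{fnode}. When $S$ is uncovered, the inequality $t_{k+1}\ge f_k$ from Proposition \ref{tkfk} specializes to the boundary identity $g_{0,k}=f_k$ (and, by complementation, $g_{k,0}=f_k$), which I would use to build an ascending family of truncations $G_m(x,y)$ that are globally meromorphic in $(x,y)$ and converge pointwise to $G$, paralleling the univariate meromorphic extension of Lemma \ref{fnmero}.

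With analytic control on $G$ in hand, the limit law of $R_{S,n}/n$ would be extracted using multivariate singularity analysis, for example the Pemantle--Wilson framework of analytic combinatorics in several variables. Heuristically one expects a continuous exponent profile $\gamma(c)$ with $g_{\lfloor cn\rfloor,\,n-1-\lfloor cn\rfloor}$ growing like $e^{n\gamma(c)}$ up to subexponential factors, and a local Laplace expansion around the maximizer(s) of $\gamma$ would yield the limit law $R_S$, either as a density (if a unique nondegenerate maximizer lies in the interior) or as a finite sum of atoms (if the maximum is attained only at $c=0$, $c=1$, or on a discrete set). Since the laws live on the compact interval $[0,1]$, tightness is automatic, so it is enough to check that every subsequential limit is recovered from this profile.

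The hard part will be producing sufficiently strong analytic control on $G(x,y)$ for an arbitrary $S$. The univariate Theorem \ref{fswl} already required the bespoke approximating family $F_n$ and handled only uncovered sets; here the singular locus of $G$ is a complex curve whose geometry governs not a single growth rate but an entire profile. Verifying that the candidate $\gamma$ is well-defined and concave, identifying its maximizers, and showing that the bi-colored meromorphic approximations converge uniformly enough to pin down the actual limit measure rather than merely its support are all serious analytic tasks. Covered sets are further complicated by the more erratic singular behavior of $F$ noted after Proposition \ref{otherlim1e}, so for such $S$ one might first have to settle Conjecture \ref{gp72} before this approach has any hope of applying to Conjecture \ref{convdist}.
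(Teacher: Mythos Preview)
The statement you are addressing is Conjecture~\ref{convdist}, which the paper explicitly leaves open; there is no proof in the paper to compare your proposal against. What you have written is not a proof but a research outline, and you say as much in your final paragraph.

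As a plan, the bivariate decomposition is natural and the Beta-integral identity $T(z)=\int_0^z G(t,z-t)\,dt$ is correct. One small slip: for an uncovered $S$ you are only guaranteed \emph{one} of the boundary identities $g_{0,k}=f_k$ or $g_{k,0}=f_k$, namely the one corresponding to whichever extreme (smallest or largest) no pattern in $S$ begins with; ``by complementation'' yields the analogous statement for $\overline S$, not a second identity for $S$ itself. More seriously, the steps you flag as ``the hard part'' are exactly the content of the conjecture. Even in the univariate setting the paper does not establish meromorphy of $F$ itself, only of the approximants $F_n$, and nothing in the paper supplies the uniform control on the bivariate singular variety, the existence or concavity of the profile $\gamma$, or the Laplace-type localization that your last two paragraphs invoke. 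The proposal therefore names a plausible toolkit (ACSV in the sense of Pemantle--Wilson) but does not close any of the gaps, and should be read as a program rather than a proof.
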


Note that the limiting distribution can be continuous, such as a uniform distribution when $S=\{132,231,321\}$ by Proposition \ref{otherlim1e}, or discrete, such as a convergence to $0$ when $S=\{21\}$. When $S$ is uncovered, we expect most of the trees to have root labels that are very small or very large. Heuristically, the ``easiest'' way to avoid $S$ when $S$ is uncovered is to have the root have label close to $1$ or $n$. In the case that $S$ contains a pattern starting with $1$, this is no longer true if our root label is $1$, but we can still have a root label close to $n$, and vice versa if $S$ contains a pattern starting with its largest element. We have the following stronger conjecture that formalizes this.

\begin{conj}
\label{berlim}
For any uncovered set $S$ of patterns, $\frac{R_{S,n}}n$ converges in distribution to a Bernoulli random variable $Ber(p)$ for some $p\in[0,1]$. If $S$ contains a pattern starting with $1$, then $p=1$, and if $S$ contains a pattern starting with its largest element, then $p=0$. Furthermore, there exist limiting probabilities $p_1,p_2,\ldots,q_1,q_2,\ldots$ summing to $1$ such that $\mathbb P(R_{S,n}=k)\rightarrow p_k$ and $\mathbb P(R_{S,n}=n+1-k)\rightarrow q_k$ as $n\rightarrow\infty$. If $S$ contains a pattern starting with $1$, then $p_1=p_2=\cdots=0$, and if $S$ contains a pattern starting with its largest element, then $q_1=q_2=\cdots=0$.
\end{conj}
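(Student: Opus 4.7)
The plan is to leverage the generating-function machinery from the proof of Theorem \ref{fswl} together with a finer decomposition of trees by root label. By complementation I may assume no pattern in $S$ begins with $1$, and let $t_n^{(k)}$ denote the number of $S$-avoiding trees on $[n]$ with root labeled $k$. The argument of Proposition \ref{tkfk} immediately gives $t_n^{(1)} = f_{n-1}$, and if additionally no pattern in $S$ begins with its largest element, then $t_n^{(n)} = f_{n-1}$ by the symmetric argument.

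For the marginal convergence I would carry out a refined singularity analysis at the dominant singularity $z = r = 1/(eL_S)$ of $F(z) = C(z)/(1-D(z))$. Assuming $r$ is a simple pole of $F$ (which should follow from positivity and aperiodicity of the coefficients of $D$, in the spirit of Lemma \ref{fnmero}), transfer theorems \cite[Theorem IV.10]{FS} applied to $F$ and to $T = \log F$ yield asymptotics of the form $f_n/n! \sim \alpha r^{-n}$ and $t_n/n! \sim \kappa r^{-n}/n$, so $t_n^{(1)}/t_n = f_{n-1}/t_n$ tends to a positive limit $p_1$. For each fixed $k > 1$, a decomposition of root-$k$ trees --- for instance, recording how the $k-1$ smaller labels attach into the root's subtrees --- produces an EGF $T^{(k)}(x)$ whose singularity analysis at $z = r$, inherited from $F$, yields $t_n^{(k)}/t_n \to p_k$. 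The analogous construction applied to root label $n+1-k$ yields the $q_k$; when $S$ contains a pattern beginning with its largest element, the same argument shows each $q_k = 0$.

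The main obstacle is tightness: showing $\sum_{K < k < n+1-K} t_n^{(k)} = o(t_n)$ uniformly as $K \to \infty$. The heuristic is that a middle root label is constrained by the pattern conditions of $S$ from both the small-label and large-label sides, so the EGF of ``middle-rooted'' trees should have radius of convergence strictly greater than $r$, yielding exponential decay of the middle-rooted count against $t_n$. Making this rigorous --- by producing a uniform generating-function bound whose dominant singularity lies strictly outside $|z|=r$ --- is the technical crux and the most delicate step of the argument. Once tightness is established, the Bernoulli parameter $p = \sum_k q_k$ is identified as $0$ (when $S$ contains a pattern beginning with its largest element), $1$ (when $S$ contains a pattern beginning with $1$, via complementation), or strictly in $(0,1)$ otherwise, matching the three cases in the conjecture.
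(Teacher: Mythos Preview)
This statement is Conjecture \ref{berlim} in the paper; the paper offers no proof, only supporting numerical data for $S=\{213\}$ and $S=\{123\}$. So there is nothing to compare your proposal against, and what you have written is an outline toward an open problem rather than a reconstruction of an existing argument.

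That said, your outline has two substantial gaps that you should be aware of. First, you assume that $F(z)=C(z)/(1-D(z))$ has a \emph{simple pole} at $z=r$. The paper does not establish this: Lemma \ref{fnmero} concerns only the approximants $F_n$, and the proof of Theorem \ref{fswl} shows the radius of convergence of $F$ equals $r$ but says nothing about the nature of the singularity. Whether $r<R$ (where $R$ is the common radius of convergence of $A,B,C,D$), which would give a genuine simple pole, or $r=R$, is left unresolved. Your claimed asymptotics $f_n/n!\sim\alpha r^{-n}$ and $t_n/n!\sim\kappa r^{-n}/n$ are essentially Conjecture \ref{sharpasy}, itself open. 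Without this, even the convergence $t_n^{(1)}/t_n\to p_1$ is not justified.

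Second, and more seriously, your tightness heuristic is both incomplete and somewhat misstated. You write that a middle root label is ``constrained by the pattern conditions of $S$ from both the small-label and large-label sides,'' but for uncovered $S$ with no pattern starting with $1$, there is no constraint coming from the small side in the sense you suggest; the constraint is that patterns in $S$ start with something other than their minimum, which interacts with a middle root label in a pattern-dependent way. You give no construction of a dominating generating function with radius strictly exceeding $r$, and it is not clear such a uniform bound exists for general uncovered $S$. The decomposition producing $T^{(k)}(x)$ for fixed $k>1$ is also left entirely unspecified. As written, the proposal is a plausible program but not a proof; the two steps above are exactly where the difficulty of the conjecture lies.
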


Some data computed for $S=\{213\}$ and $S=\{123\}$ supports this conjecture, but we do not have any data for covered sets $S$.

We now turn to the distribution of $T_{S.n}$ as $n\rightarrow\infty$. Our main motivation comes from the fact that for $S=\{21\}$, i.e. for increasing forests, there exists a bijection between forests on $[n]=P_1\sqcup\cdots\sqcup P_m$ with $m$ components such that the labels in the components are $P_1,\ldots,P_m$ and permutations of $[n]$ with $m$ cycles such that the elements in the cycles are $P_1,\ldots,P_m$. A classical result of Goncharov in \cite{G1, G2} states that in a uniform random permutation $\pi$ of $[n]$, the number of cycles $C_n$ in $\pi$ is asymptotically normal: $\frac{C_n-\mathbb E[C_n]}{\text{Var}(C_n)}$ converges in distribution to a standard Gaussian. Furthermore, $\mathbb E[C_n],\text{Var}(C_n)\sim\log n$. Another result in this area, due to Arratia and Tavaré in \cite{AT}, is that if $C_{n,k}$ is the number of cycles in $\pi$ of length $k$, then $(C_{n,1},C_{n,2},\ldots)$ converges in distribution to $(Z_1,Z_2,\ldots)$ as $n\rightarrow\infty$, where $Z_1,Z_2,\ldots$ are independent Poisson random variables with $\mathbb E[Z_k]=k^{-1}$. The correspondence between trees in increasing forests and cycles in permutations immediately gives us these results but for $T_{S,n}$ instead of $C_n$ for $S=\{21\}$. For example, Goncharov's theorem implies that $\frac{T_{S,n}-\mathbb E[T_{S,n}]}{\text{Var}(T_{S,n})}$ converges in distribution to a standard Gaussian as $n\rightarrow\infty$. We conjecture that these results also hold for other sets of patterns.

\begin{conj}
\label{asynorm}
For all nonempty sets $S$ of patterns, the random variable $T_{S,n}$ is asymptotically normal. In particular, $\frac{T_{S,n}-\mathbb E[T_{S,n}]}{\text{Var}(T_{S,n})}$ converges in distribution to a standard Gaussian as $n\rightarrow\infty$.

Furthermore, if $S$ is uncovered, then $\mathbb E[T_{S,n}],\text{Var}(T_{S,n})=\Theta(\log n)$, and $(T_{S,n,1},T_{S,n,2},\ldots)$ converges in distribution to $(Z_1,Z_2,\ldots)$, where $Z_1,Z_2,\ldots$ are independent Poisson random variables with $\mathbb E[Z_k]=\Theta(k^{-1})$.
\end{conj}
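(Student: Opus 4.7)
The approach is via analytic combinatorics applied to the bivariate exponential generating function
\[F(x,u) \;=\; \exp(uT(x)) \;=\; \sum_{n,m} f_{n,m}\, u^m \frac{x^n}{n!},\]
where $f_{n,m}$ counts forests on $[n]$ avoiding $S$ with exactly $m$ components. The probability generating function of $T_{S,n}$ is then $[x^n]F(x,u)/f_n$, so its limiting behavior is controlled by the singular structure of $T(x) = \log F(x)$ at the dominant singularity $\rho = 1/(eL_S)$ of $F$.

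For uncovered $S$, I would leverage the identity $F(x) = C(x)/(1-D(x))$ from the proof of Theorem \ref{fswl}, where $C,D$ have nonnegative coefficients with $C(r)>0$, $D(r)=1$, and $D'(r) = C(r) > 0$. Thus the pole of $F$ at $x=r$ is simple, and standard positivity arguments show that $r$ is the unique singularity of $F$ of modulus $r$ in a slightly larger disk into which $C$ and $D$ extend analytically. Consequently
\[T(x) \;=\; -\log(1-D(x)) + \log C(x) \;=\; -\log(1-x/r) + R(x),\]
for some $R$ analytic in a neighborhood of $r$. This places $(T,F)$ squarely in the \emph{exp-log schema} of \cite[Chapter IX]{FS}; the associated Gaussian-limit theorem for labelled set constructions over a logarithmic inner class (cf.\ \cite[Section IX.7]{FS} and Hwang's quasi-powers theorem) then yields $\mathbb E[T_{S,n}], \text{Var}(T_{S,n}) \sim \log n$ and convergence of $(T_{S,n}-\log n)/\sqrt{\log n}$ to a standard Gaussian.

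The Poisson limit follows from the same schema. Setting all marking variables except $u_k$ to $1$ in the refined EGF $\exp\!\bigl(\sum_j u_j t_j x^j/j!\bigr)$ perturbs the singular part of $F$ only by the analytic factor $\exp((u_k-1)\, t_k r^k/k!)$ at $x=r$, so the transfer theorem gives $T_{S,n,k} \Rightarrow \mathrm{Poisson}(\lambda_k)$ with $\lambda_k = t_k r^k/k!$, and the joint convergence of $(T_{S,n,j})_{j\ge 1}$ follows by repeating the computation with finitely many marking variables (or by the method of moments). Singularity analysis applied to $T(x) \sim -\log(1-x/r)$ gives $t_k/k! \sim 1/(k r^k)$, whence $\lambda_k \sim 1/k = \Theta(k^{-1})$, matching the claim. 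This is a direct forest analogue of the Goncharov and Arratia--Tavar\'e results for permutation cycles.

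The main obstacle is the first (unrestricted) part of the conjecture for \emph{covered} $S$. Here we lack a clean meromorphic description of $F$; the case $S=\{132,231,321\}$ from Proposition \ref{otherlim1e} — with $T(x)=x/(1-x)$ and $F(x)=e^{x/(1-x)}$ — already shows that the singular analysis can be qualitatively different, since $T$ may have a pole, making $F$ Hayman-admissible at an essential singularity. In that regime asymptotic normality should follow from a saddle-point extraction of $[x^n]F(x,u)$, but with scaling of order $\sqrt{n}$ rather than $\log n$. Controlling the singularity type of $T$ uniformly across covered $S$ is open, and per Remark \ref{crem} and Conjecture \ref{fot} essentially reduces to a dichotomy between the regimes $f_n = O(t_{n+1})$ (logarithmic-type singularity, exp-log argument as above) and $t_{n+1} = o(f_n)$ (polar or algebraic singularity, Hayman-type saddle-point argument). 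I expect the hard part to be establishing this dichotomy and ruling out pathological intermediate behavior for covered $S$, which is a prerequisite even for Conjecture \ref{gp72}.
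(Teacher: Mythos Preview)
This statement is a \emph{conjecture} in the paper, not a theorem; the paper offers no proof, only heuristic discussion and numerical evidence. So there is no proof to compare against, and your proposal should be read as an attempted resolution of an open problem.

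Your exp-log approach for uncovered $S$ has a genuine gap. You assert that $C$ and $D$ extend analytically to a disk of radius strictly larger than $r$, so that the only singularity of $F=C/(1-D)$ on $|z|=r$ is a simple pole coming from the vanishing of $1-D$. But the proof of Theorem~\ref{fswl} explicitly allows $r=R$, where $R$ is the common radius of convergence of $A,B,C,D$; in that regime $C$ and $D$ are themselves singular at $r$, and your decomposition $T(x)=-\log(1-x/r)+R(x)$ with $R$ analytic near $r$ is unjustified. The case $S=\varnothing$ (which is vacuously uncovered) shows this is not a mere technicality: there $t_{k+1}-f_k=k(k+1)^{k-1}$, so $A$ has radius of convergence $R=e^{-1}=r$, and indeed $F$ has a square-root branch point rather than a simple pole at $r$. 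In that case $T_{S,n}$ is \emph{not} asymptotically normal, yet nothing in your argument separates $S=\varnothing$ from nonempty $S$. The paper flags precisely this issue in the paragraph following the conjecture: ``this shows a shortcoming of our method, which does not distinguish between when $S$ is empty and when $S$ is nonempty. More sophisticated analysis of the singularity of $F(x)$ is needed if we are to prove the conjecture using this approach.''

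Your Poisson-limit argument and the asymptotic $\lambda_k\sim 1/k$ likewise rely on $T$ having a pure logarithmic singularity at $r$, so they inherit the same gap. The missing ingredient is a proof that $r<R$ for every nonempty uncovered $S$---equivalently, that $A(x)=\sum_k(t_{k+1}-f_k)x^k/k!$ has strictly larger radius of convergence than $F(x)$. Neither the paper nor your proposal supplies this, and it appears to be the heart of the matter.
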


Note that the hypothesis on $S$ being nonempty is necessary. When $S=\varnothing$, the total number of rooted forests on $[n]$ is $(n+1)^{n-1}$ and the total number of rooted trees on $[n]$ is $n^{n-1}$, so $\mathbb P(T_{S,n}=1)\rightarrow\frac1e$ as $n\rightarrow\infty$ and $T_{S,n}$ cannot be asymptotically normal in this case. The behavior of the limiting distribution is related to the behavior of the exponential generating function $F(x)$ around its singularity $\frac1{eL_S}$ by \cite[Section IX.4]{FS}. Indeed, as mentioned previously, for uncovered $S$ we expect $F(x)$ to be well-behaved because of the approximation by $F_m(x)$, which has a meromorphic continuation to $\mathbb C$. However, this shows a shortcoming of our method, which does not distinguish between when $S$ is empty and when $S$ is nonempty. More sophisticated analysis of the singularity of $F(x)$ is needed if we are to prove the conjecture using this approach. Data computed for all of the uncovered sets we considered in this section supports this conjecture. On the other hand, $T_{S,n}$ does seem to be asymptotically normal for covered sets $S$ as well, but the point of concentration is different, most likely due to the different behavior of $F(x)$ around its singularity. In the case of $S=\{132,231,321\}$, it appears that $\mathbb E[T_{S,n}]\sim\sqrt n$. The case of $S=\{123,21\}$ is equivalent to the distribution of Stirling numbers of the second kind. This problem was considered by Harper in \cite{H}, and Harper's result translates to the asymptotic normality of $T_{S,n}$. The mean, however, is of a different order than $\sqrt n$ and $\log n$. It appears that a variety of asymptotics can occur for the mean of $T_{S,n}$ for covered sets, in contrast to uncovered sets.

It would also be interesting to examine other forest statistics as well. Some that we did not consider include the depth of the forest, the number of leaves in the forest, and the degree of the root of a random tree in the forest.

\section*{Acknowledgments} This research was funded by NSF-DMS grant 1949884 and NSA grant H98230-20-1-0009. The author thanks Amanda Burcroff, Swapnil Garg, and Alan Peng for fruitful discussions about this research and for reading drafts of the paper, as well as Noah Kravitz, Ashwin Sah, and Fan Zhou for helpful suggestions. The author also thanks Professor Joe Gallian for suggesting the problem and running the Duluth REU in which this research was conducted and Amanda Burcroff, Colin Defant, and Yelena Mandelshtam for fostering a productive virtual research environment through their mentorship. Finally, the author thanks the anonymous referees for their careful reading and insightful comments that improved the presentation of this paper.

\textsc{University of Cambridge, The Old Schools, Trinity Ln, Cambridge CB2 1TN}

\emph{E-mail address: }\href{mailto:mr918@cam.ac.uk}{\tt mr918@cam.ac.uk}
\end{document}